 \newtheorem{theorem}{Theorem}[section]
 \newtheorem*{thm*}{Theorem} 
 \newtheorem{lemma}[theorem]{Lemma}
 \newtheorem{proposition}[theorem]{Proposition}
 \newtheorem{corollary}[theorem]{Corollary}
 \newtheorem{definition}[theorem]{Definition}
  \theoremstyle{definition}
 \newtheorem{remark}[theorem]{Remark}
\newcommand{\Presym}{\mathsf{Pre}\textrm{-}\mathsf{Sym}}
\newcommand{\hor}{\mathrm{hor}}
\renewcommand{\graph}{\mathrm{graph}}
\newcommand{\Fol}{K}
\newcommand{\Foliations}{\mathsf{Fol}}
\newcommand{\id}{\mathrm{id}}
\newcommand{\Lie}{\mathcal{L}}
\newcommand{\Diff}{\mathsf{Diff}}
\newcommand{\MC}{\mathsf{MC}}
\newcommand{\NN}{\ensuremath{\mathbb N}}
\newcommand{\ZZ}{\ensuremath{\mathbb Z}}
\newcommand{\QQ}{\ensuremath{\mathbb Q}}
\newcommand{\RR}{\ensuremath{\mathbb R}}
\newcommand{\TT}{\ensuremath{\mathbb T}}
\newcommand{\pd}[1]{\frac{\partial}{\partial #1}} %\pd{x}
\renewcommand{\graph}{\mathrm{graph}}
\newcommand{\lione}{$L_{\infty}[1]$-algebra }
\begin{document}

\author{Florian Sch\"atz}
\email{florian.schaetz@gmail.com}
\address{Eckenheimer Landstr. 136, 60318 Frankfurt am Main, Germany.
}
  
 \author{Marco Zambon}
\email{marco.zambon@kuleuven.be}
\address{KU Leuven, Department of Mathematics, Celestijnenlaan 200B box 2400, 3001 Leuven, Belgium.}

%\date{\today}
\subjclass[2010]{Primary: 
53C12, %  	Foliations (differential geometric aspects)
53D05, %Symplectic manifolds, general
%14B12,  	%Local deformation theory, Artin approximation, etc
58H15.  	%Deformations of structures  (in 58:	Global analysis, analysis on manifolds 
Secondary: 
17B70.  	%Graded Lie (super)algebras
%53D17. % 	Poisson manifolds; Poisson groupoids and algebroids 
\\
Keywords: 
pre-symplectic geometry, 
deformation theory,
foliation,
%graded Lie algebra, 
$L_{\infty}$-algebra.
%, Dirac geometry, foliation.
}

 \title[Gauge equivalences for foliations and   pre-symplectic structures]{Gauge equivalences\\ for foliations and   pre-symplectic structures}

\begin{abstract}
We consider the deformation theory of two kinds of geometric objects: foliations on one hand, pre-symplectic forms on the other. For each of them, we prove that the geometric notion of equivalence given by isotopies agrees with the algebraic   notion of gauge equivalence obtained from the $L_{\infty}$-algebras governing these deformation problems. 
\end{abstract}

\maketitle

\tableofcontents

\section*{\textsf{Introduction}}

Pre-symplectic forms are   closed 2-forms whose kernel has constant rank. 
They arise for instance {by} restricting symplectic forms to coisotropic submanifolds, such as the zero level sets of moment maps.  {They are precisely the 2-forms which admit local coordinates making them constant, and they can be also regarded as transversally symplectic foliations.}
The deformation theory of pre-symplectic forms -- unlike the one of symplectic forms -- is non-trivial, due to the fact that two conditions have to be preserved simultaneously: the closeness condition, and the constant rank one. In a previous publication \cite{SZPre}
we showed that pre-symplectic deformations of a pre-symplectic manifold $(M,\eta)$ are governed by an $L_{\infty}$-algebra $L_{\infty}(M,\eta)$ whose  
brackets are trivial except possibly for 
those of arity one, two and three. 

Further, in \cite{SZPre} we related the $L_\infty$-algebra $L_{\infty}(M,\eta)$ to the $L_{\infty}$-algebra governing  deformations of involutive distributions (i.e., of foliations), by means of a strict morphism. This can be seen as an enhancement of the geometric fact that to every  
pre-symplectic form there is an associated involutive distribution,  {given by} its kernel.\\

In this note we address \emph{equivalences} of deformations.
For pre-symplectic structures, there is a \emph{geometric} notion of equivalence: $\eta_1$ and $\eta_2$ are equivalent if they are related by a diffeomorphism isotopic to the identity. There is also an \emph{algebraic} notion of equivalence, namely the gauge equivalence of Maurer-Cartan elements of the $L_{\infty}$-algebra $L_{\infty}(M,\eta)$. 
 One of   {the} main results
{of this note} is Thm. \ref{thm:isotopyMC}, stating that these two notions essentially coincide when $M$ is compact.

In the same vein,   there are also a geometric and an algebraic notion of equivalence {for foliations}, which we  show to coincide  in Thm. \ref{thm:isotopyMCfol}. Even though the  {$L_\infty$-algebras controlling the deformations of foliations were} investigated already in the early 2000's, to our knowledge the equivalences are addressed here for the first time.

{We use these results to draw conclusions about infinitesimal and local properties of the moduli space of 
pre-symplectic structures (respectively foliations) modulo isotopies. 
Specifically, we address its smoothness, the dimension of the formal tangent spaces, and whether first order deformations extend to  (formal) paths of deformations. We do so in \S \ref{subsection:mod}.}

 The relation{ship} between {the} equivalences  {attached to the deformations of pre-symplectic forms and foliations, respectively}, is discussed in Remark \ref{rem:isopreG}.

We finish with a technical note.
For both pre-symplectic forms and foliations, the algebraic notion of equivalence
is somewhat hard to handle since it is expressed in terms of solutions of a PDE.
We bypass this problem by rephrasing algebraic equivalences on $M$ in terms of the product manifold $M\times \RR$, see Lemma \ref{lem:productfol} and Lemma \ref{rem:MCprod}.

\bigskip

\paragraph{\bf Acknowledgements:}

M.Z. acknowledges partial support by IAP Dygest, 
the long term structural funding -- Methusalem grant of the Flemish Government, 
the FWO under EOS project G0H4518N, the FWO research project G083118N (Belgium).  
{We thank the referee for  comments that allowed to improve the text, and Stephane Geudens for helpful conversations and useful references.}
\\

 \addtocontents{toc}{\protect\mbox{}\protect}%%%%%%%%%%Skips one line in TOC
\section{\textsf{Review: Deformations of foliations and of  pre-symplectic structures}}\label{section: pre-symplectic structures}

We recall results   {on} the deformation theory of two kinds of (interrelated) geometric objects: foliations  on one side,  pre-symplectic forms on the other.

\subsection{Deformations}
\label{subsec:defs}

 Deformations {of a given structure are often} controlled by an algebraic structure {known as} a differential graded Lie algebra or, more generally, an $L_{\infty}$-algebra.  
%We will work with the following notion, which is equivalent to the one of $L_{\infty}$-algebra (by a degree shift).
{
An $L_\infty$-algebra is a rigorous formalization of the naive notion of a Lie algebra up to homotopy. It consists of the datum of a $\ZZ$-graded vector space $V$ endowed with degree $(2-k)$ multilinear brackets $\wedge^kV \to V$ satisfying a set of quadratic relations generalizing the Jacobi identity, see  \cite{LadaStasheff} for a definition. By shifting degrees by $1$, one gets the equivalent notion of an $L_\infty[1]$-algebra. 
}

\begin{definition}  An \emph{$L_\infty[1]$-algebra} is a $\ZZ$-graded vector space $W$, 
equipped with a collection of graded symmetric brackets $(\lambda_k\colon W^{\otimes k} \longrightarrow W)_{k\ge1}$ of degree $1$ which satisfy 
%a collection of quadratic relations \cite{LadaStasheff}, called higher Jacobi identities.
{the (shifted) higher Jacobi identities of $L_\infty$-algebras.}
\end{definition}

%For the $L_\infty[1]$-algebras appearing in this note, all the  multibrackets vanish except possibly for the first three, namely 
%$\lambda_1,\lambda_2,\lambda_3$. We provide the definition of Maurer-Cartan element only for such 
%$L_\infty[1]$-algebras.
{We provide the definition of Maurer-Cartan element only in the case of  $L_\infty[1]$-algebras for which all but finitely many  multibrackets vanish.} For the $L_\infty[1]$-algebras appearing in this note, all the  multibrackets vanish except possibly for the first three, namely $\lambda_1,\lambda_2,\lambda_3$.

\begin{definition}
A \emph{Maurer-Cartan element} of an $L_\infty[1]$-algebra {$(W,\lambda_1,\dots,\lambda_n)$, where $n\in \NN$,} is a degree zero element $\beta\in W$ such that
$$\lambda_1(\beta)+\frac{1}{2!} \lambda_2(\beta,\beta) 
{+ \cdots +\frac{1}{n!} \lambda_n(\beta,\cdots,\beta)=0}.$$
\end{definition}

Given a (algebraic or geometric) object,  its deformations are often governed by an $L_\infty[1]$-algebra $W$, in the following strong sense: the Maurer-Cartan elements of $W$ are in natural  bijection with the ``small'' deformations of that object. 
For instance, given an integrable distribution $K$ on a manifold $M$, fix an auxiliary distribution $G$ with $K\oplus G=TM$.
In Prop. \ref{prop:Ji}
 we   display an $L_\infty[1]$-algebra whose Maurer-Cartan elements are naturally identified with integrable distributions  which are transverse to $G$.
We  do the same for pre-symplectic forms in Thm. \ref{theorem: main result}.
{To the best of our knowledge, the first geometric instance of the above phenomenon in which all the multibrackets of the $L_\infty[1]$-algebra can be non-zero, is the work of Oh-Park \cite{OP} about deformations of coisotropic submanifolds in symplectic manifolds.
%. There it is shown that deformations of coisotropic submanifolds in symplectic manifolds which are sections of a certain tubular neighborhood are  governed by an $L_\infty[1]$-algebra for which the multibrackets of all orders are typically non-trivial.
For a general discussion of the role of $L_\infty[1]$-algebra in deformation theory, see \cite{FukDef}.}

\subsection{{Foliations and their deformations}}
\label{subsection: fols}

\subsubsection{\underline{Foliations}}
\label{subsubsection:foliations}
  \hfill \break   \vspace{-3mm}%skips a line 

 Let $M$ be a smooth manifold.
A \emph{rank $l$ foliation} is a decomposition of  $M$ into immersed submanifolds of dimension $l$, subject to a local triviality condition (see for instance \cite{LeeIntroSmooth}).

 {
The celebrated Frobenius theorem states that there is a canonical bijection between foliations on $M$ and involutive distributions on $M$, i.e. subbundles of $TM$ whose sections are stable w.r.t. the Lie bracket of vector fields. More precisely,  given a foliation $M$, the associated involutive distribution $D$ is given at any point $p$ as follows: $D_p$ is the tangent space to the immersed submanifold (leaf) of the foliation through the point $p$. Because of this bijection, in the following we will work with  involutive distributions. 
}

\subsubsection{\underline{Deformation theory of foliations}}
\label{subsection: deformations of foliations}
\hfill \break   \vspace{-3mm}%skips a line
 
We review the $L_{\infty}[1]$-algebra governing deformations of foliations.  
{The following proposition \cite[Prop. 4.6]{SZPre} summarizes results by Huebschmann \cite{Huebsch}, Vitagliano \cite{VitaglianoFol} and Xiang Ji \cite{Ji}:}
\begin{proposition}\label{prop:Ji}
Let $K$ be an involutive distribution on a manifold $M$, and let $G$ be a complement.
There is an $L_{\infty}[1]$-algebra structure on
 $\Gamma(\wedge K^*\otimes G)[1]$, whose only possibly non-vanishing brackets  {we denote by} $l_1,-l_2,l_3$, with the property that the graph of $\phi\in \Gamma(K^*\otimes G)[1]$ is involutive if{f}  $\phi$ is a Maurer-Cartan element.
\end{proposition}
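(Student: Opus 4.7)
The plan is to convert the involutivity condition for the graph $K_\phi := \{X + \phi(X) : X \in K\}$ into a polynomial equation in $\phi$, read off the brackets $l_1, l_2, l_3$ from the three homogeneous pieces, and then verify the (shifted) higher Jacobi identities via a derived bracket construction rather than by brute force.

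\smallskip

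Denote by $\mathrm{pr}_K$ and $\mathrm{pr}_G$ the projections associated with $TM = K\oplus G$. Every distribution transverse to $G$ is uniquely of the form $K_\phi$ for some $\phi\in\Gamma(K^*\otimes G)$, and $K_\phi$ is involutive iff for all $X, Y\in\Gamma(K)$,
$$\mathrm{pr}_G[X+\phi X,\, Y+\phi Y] \;=\; \phi\bigl(\mathrm{pr}_K[X+\phi X,\, Y+\phi Y]\bigr).$$
Expanding the bracket and using that $K$ is involutive kills the zeroth-order term, so what remains is a polynomial in $\phi$ of total degree exactly three. Polarizing its three homogeneous components one obtains, for $X, Y\in\Gamma(K)$,
\begin{align*}
(l_1\phi)(X,Y) &= \mathrm{pr}_G[X,\phi Y] - \mathrm{pr}_G[Y,\phi X] - \phi[X,Y], \\
\tfrac{1}{2}\, l_2(\phi,\phi)(X,Y) &= \mathrm{pr}_G[\phi X,\phi Y] - \phi\,\mathrm{pr}_K[X,\phi Y] - \phi\,\mathrm{pr}_K[\phi X,Y], \\
\tfrac{1}{6}\, l_3(\phi,\phi,\phi)(X,Y) &= -\phi\,\mathrm{pr}_K[\phi X,\phi Y].
\end{align*}
By construction, the equation $l_1\phi + \tfrac12 l_2(\phi,\phi) + \tfrac16 l_3(\phi,\phi,\phi)=0$ is equivalent to involutivity of $K_\phi$. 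One observes that $l_1$ is the Chevalley--Eilenberg/Koszul differential of the Bott partial $K$-connection on $G\cong TM/K$, which makes $l_1^2 = 0$ automatic and, with a suitable overall sign, accounts for the $-l_2$ convention in the statement.

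\smallskip

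Next, I would extend $l_1, l_2, l_3$ from $\Gamma(K^*\otimes G)$ to the full graded space $\Gamma(\wedge^\bullet K^*\otimes G)[1]$, placing $\Gamma(\wedge^k K^*\otimes G)$ in degree $k-1$ so that each of the three operations has degree $+1$. The extension is uniquely determined by requiring graded derivation behaviour in each slot with respect to the $\Gamma(\wedge K^*)$-module structure on $\Gamma(\wedge K^*\otimes G)$.

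\smallskip

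The main obstacle is verifying the higher Jacobi identities; a direct check is possible but combinatorially painful. The cleaner route, essentially the one followed in \cite{Huebsch, VitaglianoFol, Ji}, is to realise $(l_1, l_2, l_3)$ as higher derived brackets. Concretely, one embeds $\Gamma(\wedge^\bullet K^* \otimes G)[1]$ as an abelian subspace of a suitable ambient graded Lie algebra -- e.g.\ the Nijenhuis--Richardson algebra of $TM$-valued forms bigraded by $TM = K\oplus G$, or the super-Poisson algebra on $T^*[1](K[1]\oplus G[1])$ -- together with a projector whose kernel is a Lie subalgebra. Voronov's higher derived bracket construction then produces the brackets from iterated commutators with a Maurer--Cartan element encoding the splitting, and the $L_\infty[1]$ axioms follow formally from the ambient Jacobi identity. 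The fact that only $l_1, l_2, l_3$ can be non-trivial reflects that $TM$ has only two summands in the splitting, which bounds the admissible bi-degrees. With this framework in place, the proposition reduces to the explicit matching carried out above.
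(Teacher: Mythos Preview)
Your approach is correct, and in fact the paper does not supply a proof of this proposition at all: it is stated as a review result, citing \cite{Huebsch,VitaglianoFol,Ji}, and the paper merely records the explicit formulae for $l_1,l_2,l_3$ on arbitrary-degree inputs. Your derivation of the brackets on degree-zero elements by expanding the involutivity condition for $K_\phi$ is exactly right and matches those formulae (specialized to $k=l=m=1$), and your plan to obtain the higher Jacobi identities via Voronov's derived bracket construction is precisely what is carried out in the cited references. So there is nothing to compare against in the paper itself; your outline is faithful to the literature it points to.

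One small caveat: the phrase ``uniquely determined by requiring graded derivation behaviour in each slot with respect to the $\Gamma(\wedge K^*)$-module structure'' is not quite accurate as stated---the brackets are not $\Gamma(\wedge K^*)$-multilinear, and the extension to higher arity is really fixed by the derived-bracket construction itself (or by the explicit shuffle formulae the paper displays) rather than by a module-derivation property. This does not affect the validity of the argument, since you ultimately defer to the references for the full $L_\infty[1]$ structure anyway.
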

{Rephrasing this proposition, taking the graph of elements of $\Gamma(K^*\otimes G)$ gives a bijection
\begin{equation*}
%\label{intro: parametrization}
\boxed{  \MC(K) \to  \{\text{involutive distributions  on $M$ transverse to $G$}\}},
\end{equation*}
where $\MC(K)$ denotes the set of Maurer-Cartan elements of $(\Gamma(\wedge K^*\otimes G)[1],l_1,-l_2,l_3)$.}
  
 The formulae for $l_1,l_2,l_3$ are as follows.   We remark that $l_1$ is the differential associated to the flat $K$-connection 
 on $G$ which, under the identification $G\cong TM/K$, corresponds to the Bott connection. (Thus the underlying cochain complex is  the one used by Heitsch \cite{Heitsch} to describe infinitesimal deformations {of foliations}.)
For all $\xi \in \Gamma(\wedge^k K^*\otimes G)[1], \psi \in \Gamma(\wedge^l K^*\otimes G)[1],
 \phi \in \Gamma(\wedge^m K^*\otimes G)[1]$ we have:

\begin{eqnarray*}
 l_1(\xi)(X_1,\dots,X_{k+1})&=&\sum_{i=1}^{k+1}(-1)^{i+1} \mathrm{pr}_G\Big[X_i\,,\,\xi(X_1,\dots,\widehat{X_i},\dots,X_{k+1})\Big]\\
 &+& \sum_{i<j}(-1)^{i+j}\xi\Big([X_i,X_j],X_1,\dots,\widehat{X_i},\dots,\widehat{X_j},\dots,X_{k+1}\Big)\\ 
  l_2(\xi,\psi)(X_1,\dots,X_{k+l})&=&(-1)^{k}\sum_{\tau\in S_{k,j}}(-1)^{\tau} \mathrm{pr}_G\Big[\xi(
 X_{\tau(1)},\dots,X_{\tau(k)})\,,\,\psi(X_{\tau(k+1)},\dots,X_{\tau(k+l)})\Big]
 \\
  &{+}&(-1)^{k(l+1)} \sum_{\tau\in S_{l,1,k-1}}(-1)^{\tau} \xi\Big(\mathrm{pr}_K\Big[\psi( X_{\tau(1)},\dots,X_{\tau(l)})\,,\,X_{\tau(l+1)}\Big],\\
  && \hspace{6.5cm} ,X_{\tau(l+2)},\dots,  X_{\tau(l+k)}\Big)
 \\
  &{-}& (-1)^{k} (\xi \leftrightarrow \psi, k\leftrightarrow l)\end{eqnarray*}
  \begin{align*}
 l_3(\xi,\psi,\phi)(X_1,\dots,&X_{k+l+m-1})=  (-1)^{m+k(l+m)}\cdot\\
\sum_{\tau\in S_{l,m,k-1}}(-1)^{\tau}  \xi\Big(\mathrm{pr}_K &\Big[
 \psi(X_{\tau(1)},\dots,X_{\tau(l)})\,,\,\phi(X_{\tau(l+1)},\dots,X_{\tau(l+m)})\Big],
,\dots,X_{\tau(l+m+k-1)}\Big) \pm\circlearrowleft
\end{align*}
Here $X_i\in \Gamma(K)$,  $\mathrm{pr}_G$ is the projection $TM=G\oplus K\to G$, and similarly for $\mathrm{pr}_K$. {We use $S_{i,j,k}$ to denote} the set of permutations $\tau$ of $i+j+k$ elements such that the order is preserved within each block:
$\tau(1)<\cdots<\tau(i), \tau(i+1)<\cdots<\tau(i+j), \tau(i+j+1)<\cdots<\tau(i+j+k)
$. The symbol
$(\xi \leftrightarrow \psi, k\leftrightarrow l)$ denotes the sum just above it, switching $\xi$ with $\psi$ and $k$ with $l$. The symbol $\circlearrowleft$ denotes cyclic permutations in $\xi,\psi,\phi$.
%\end{remark}

\subsection{Pre-symplectic structures and their deformations}

After introducing pre-symplectic structures, we review the $L_{\infty}[1]$-algebra governing their deformations, following \cite[\S 1-\S3]{SZPre}.

\subsubsection{\underline{  Pre-symplectic structures}}
\label{subsection: pre-symplectic structures}
\hfill \break   \vspace{-3mm}%skips a line  

\begin{definition}
%\label{definition: pre-symplectic}
{A \emph{pre-symplectic structure} on a manifold $M$ is an element $\eta\in  \Omega^2(M)$ such that
the rank of  $\eta^\sharp: TM \to T^*M, v\mapsto \iota_v\eta$ is constant, and so that $d\eta=0$.}

{We define  $\Presym^k(M):=\{\text{pre-symplectic structures of rank $k$ on $M$}\}$.}
\end{definition}

\begin{remark}
{
Let $(M,\eta)$ be a pre-symplectic manifold. Canonically associated to it there is a 
(constant rank) distribution
$$K:=\ker(\eta^\sharp),$$
which is involutive since $d\eta=0$, and thus tangent to a foliation on $M$.
We denote the corresponding foliated
de~Rham complex by
$\Omega(K):=(\Gamma(\wedge K^*),d_\Fol).$ 
It fits in a short exact sequence of cochain complexes
\begin{equation}
 \label{eq:ses}
\xymatrix{
0 \ar[r] &\Omega_\hor(M) \ar[r] & \Omega(M) \ar[r]^r & \Omega(K)\ar[r] & 0
},
\end{equation}
where the map $r$ is given by restriction. Notice that its kernel
 $\Omega_\hor(M)$ is generated  as a multiplicative ideal
of $\Omega(M)$ by $\Gamma(K^\circ)$, where $K^\circ \subset T^*M$ is the  annihilator of $K$.
We denote by $H_\hor(M)$ the cohomology of $\Omega_\hor(M)$.}
\end{remark}

\begin{remark}\label{rem:tpres}
{The formal tangent space to $\Presym^k(M)$ at the point $\eta$ is given by
$$ T_\eta \left(\Presym^k(M)\right)  \cong \{\alpha \in \Omega^2(M) \textrm{ closed}, r(\alpha)=0\} = \{\text{2-cocycles in } \Omega_\hor(M)).$$}
\end{remark}

\subsubsection{\underline{A parametrization inspired by Dirac geometry}}
\label{subsection: Dirac parametrization}
\hfill \break   \vspace{-3mm}%skips a line

{Fix  a finite-dimensional  vector space $V$. Any $Z\in \wedge^2 V$ can be alternatively described by means} of the linear map  $$Z^\sharp\colon V^* \to V, \quad \xi \mapsto \iota_\xi Z = Z(\xi,\cdot),$$
{where the antisymmetry of $Z$ is encoded in the following equation for $Z^\sharp$:
 $(Z^\sharp)^*=-Z^\sharp$.}
 %where $(Z^\sharp)^*$ is the dual map to $Z^\sharp$.
%$\langle Z^\sharp \xi_1,\x_2 \rangle=-\langle \xi_1,Z^\sharp  \x_2 \rangle$.
%\begin{definition}
{We define $\mathcal{I}_Z$ to consist of all $\beta\in \wedge^2 V^*$
such that $\mathrm{id} + Z^\sharp \beta^\sharp\colon V\to V$ is invertible. 
%\end{definition}
The formula
\begin{equation}\label{eq:alphabeta}
(F (\beta))^\sharp := \beta^\sharp(\id + Z^{\sharp}\beta^\sharp)^{-1}=(\id + \beta^\sharp Z^{\sharp})^{-1}\beta^\sharp
\end{equation}
determines a (non-linear) map  $F \colon \mathcal{I}_Z\to \wedge^2 V^*$, which is a diffeomorphism onto its image $\mathcal{I}_{-Z}$. Denoting} by $G$ the image of $Z^{\sharp}$, for any $\beta \in  \wedge^2V^*$  we have:  $\beta\in  \mathcal{I}_Z$ if{f} $\mathrm{id_G} + Z^\sharp \sigma^\sharp\colon G\to G$ is invertible, where $\sigma:=\beta|_{\wedge^2G}$.
%A Dirac-geometric interpretation the map $F$ is given in   \cite{SZDirac}.
 
{Now fix $\eta \in \wedge^2 V^*$ of rank $k$.
Fix also a subspace $G$ such that $G\oplus K=V$, where $K=\ker(\eta^\sharp)$.
Since the restriction $\eta\vert_G\in \wedge^2G^*$ is non-degenerate, there exists a unique
$Z \in \wedge^2 G$ such that
$ Z^\sharp=-(\eta\vert_G^\sharp)^{-1}$.}  

\begin{definition}\label{definition: Dirac exponential map}
The Dirac exponential map $\exp_{\eta}$ of $\eta$ (and for fixed $G$) is the mapping
$${\exp_{\eta}}\colon \mathcal{I}_Z \to \wedge^2 V^*, \quad \beta \mapsto \eta + 
F(\beta).$$ 
\end{definition}

Let $ r: \wedge^2 V^* \to \wedge^2 K^*$ be the restriction map; we have the natural identification $\ker(r) = \wedge^2 G^* \oplus (G^*\otimes K^*)$.
The following theorem is \cite[Thm. 2.6]{SZPre}. Item (iii) {below} asserts that, upon restriction to
$\ker(r)$, the map 
 $\exp_{\eta}$ is a submanifold chart for $(\wedge^2 V^*)_k$,   the space of  {skew-symmetric bilinear forms} on $V$ of rank $k$.

\begin{theorem}\label{theorem: almost Dirac structures}
\hspace{0cm}
\begin{enumerate}
\item[(i)] Let $\beta \in \mathcal{I}_Z$. Then 
 $\exp_{\eta}(\beta)$ lies in $(\wedge^2 V^*)_k$ if, and only if, $\beta$ lies in $\ker(r)=(K^*\otimes G^*)\oplus \wedge^2G^*$.
\item[(ii)] 
Let $\beta = (\mu,\sigma) \in \mathcal{I}_Z\cap ((K^*\otimes G^*)\oplus \wedge^2G^*)$. Then  $\exp_{\eta}(\beta)$ is the unique skew-symmetric bilinear form on $V$ with the following properties:
\begin{itemize}
\item  its restriction to $G$ equals {$(\eta+F(\sigma))|_{\wedge^2G}$}
\item its kernel is the graph of the map $Z^\sharp \mu^\sharp= -(\eta\vert_G^{\sharp})^{-1}\mu^\sharp: K\to G$.
\end{itemize}
\item[(iii)]
The Dirac exponential map $\exp_{\eta}: \mathcal{I}_Z \to \wedge^2 V^*$ restricts to a 
diffeomorphism 
$$\mathcal{I}_Z \cap (K^*\otimes G^*)\oplus \wedge^2G^* \stackrel{\cong}{\longrightarrow}
{\{\eta' \in (\wedge^2 V^*)_k|\; \ker(\eta') \text{ is transverse to } G\}.}$$
\end{enumerate}
\end{theorem}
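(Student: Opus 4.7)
The plan is to carry out all computations in block form with respect to the splitting $V = G \oplus K$. Any $\beta \in \wedge^2 V^*$ decomposes uniquely as $\beta = \sigma + \mu + \tau$ with $\sigma \in \wedge^2 G^*$, $\mu \in K^* \otimes G^*$ and $\tau \in \wedge^2 K^*$, so $\beta \in \ker r$ precisely when $\tau = 0$. In the same splitting, $\beta^\sharp : V \to V^*$ is represented by a $2\times 2$ block matrix with diagonal entries $\sigma^\sharp$ and $\tau^\sharp$ and off-diagonal blocks determined by $\mu^\sharp$. Since $Z \in \wedge^2 G$, the map $Z^\sharp: V^* \to V$ has all blocks zero except the $G^* \to G$ one, which I denote by $z = Z^\sharp|_{G^*}$. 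Consequently $\id + Z^\sharp \beta^\sharp$ is block upper-triangular, and $\beta \in \mathcal{I}_Z$ is equivalent to the invertibility of $\id_G + z\sigma^\sharp$.

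I then compute $\exp_\eta(\beta)^\sharp = \eta^\sharp + \beta^\sharp(\id + Z^\sharp \beta^\sharp)^{-1}$ block-by-block, using the push-through identity $(\id + z\sigma^\sharp)^{-1} z = z(\id + \sigma^\sharp z)^{-1}$ together with the defining relation $\eta|_G^\sharp \circ z = -\id_{G^*}$. Two simplifications emerge: the $GG$-block equals $\eta|_G^\sharp (\id + z\sigma^\sharp)^{-1}$, which is automatically invertible and depends only on $\sigma$; the $KK$-block equals $\tau^\sharp$. To find the kernel, the $G^*$-equation is uniquely solvable, giving $v_G = Z^\sharp \mu^\sharp v_K$ after a second use of the push-through identity; substituting into the $K^*$-equation causes all terms involving $\sigma$ and $\mu$ to cancel, reducing it to $\tau^\sharp v_K = 0$. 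Hence
$$\ker \exp_\eta(\beta) = \{(Z^\sharp \mu^\sharp v_K, v_K) : v_K \in \ker \tau^\sharp\},$$
so $\mathrm{rank}\, \exp_\eta(\beta) = k + \mathrm{rank}\, \tau$. This proves (i) in both directions and the kernel claim of (ii). The restriction-to-$G$ claim in (ii) follows from the $GG$-block formula by checking algebraically that $\eta|_G^\sharp (\id + z\sigma^\sharp)^{-1} = \eta|_G^\sharp + \sigma^\sharp (\id + z\sigma^\sharp)^{-1}$, which equals $(\eta + F(\sigma))|_{\wedge^2 G}^\sharp$ on $G$.

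Part (iii) then follows by constructing an explicit inverse. Given $\eta' \in (\wedge^2 V^*)_k$ whose kernel is transverse to $G$: that kernel projects isomorphically onto $K$ and hence is the graph of a unique linear map $K \to G$, which by (ii) determines $\mu$ via the formula $Z^\sharp \mu^\sharp$; transversality also forces $\eta'|_G$ to be nondegenerate, and solving $(\eta'|_G)^\sharp = \eta|_G^\sharp (\id + z\sigma^\sharp)^{-1}$ uniquely determines $\sigma$ with $\id + z\sigma^\sharp$ invertible, placing $\beta = \sigma + \mu$ in $\mathcal{I}_Z \cap \ker r$. Smoothness in both directions is immediate from the explicit formulae. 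The principal technical obstacle is the bookkeeping in the block computation of $\exp_\eta(\beta)^\sharp$: one must deploy the push-through identity and the relation $\eta|_G^\sharp z = -\id$ at the right moments to see the cancellations, in particular the surprising one that reduces the $K^*$-kernel equation to $\tau^\sharp v_K = 0$ independently of $\sigma$ and $\mu$. Once those identities are in play, the remainder is straightforward linear algebra.
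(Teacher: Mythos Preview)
Your block-matrix computation is correct: the key identities $\eta|_G^\sharp \circ z = -\id_{G^*}$ and the push-through $(\id + z\sigma^\sharp)^{-1}z = z(\id + \sigma^\sharp z)^{-1}$ do produce exactly the cancellations you describe, and the rank formula $\operatorname{rank}\exp_\eta(\beta) = k + \operatorname{rank}\tau$ together with the explicit inverse in (iii) is sound. One small point worth making explicit in (ii) is the uniqueness clause: a skew form $\omega$ with $\ker\omega$ complementary to $G$ is the pullback of $\omega|_G$ along the projection $V = G \oplus \ker\omega \to G$, hence determined by the two listed data; you use this implicitly but do not state it.

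Note, however, that the present paper does \emph{not} prove this theorem: it is quoted from \cite[Thm.~2.6]{SZPre} as part of the review in \S\ref{section: pre-symplectic structures}. The proof there is organised around Dirac geometry, viewing $\exp_\eta$ through the graph of $\eta + F(\beta)$ inside $V\oplus V^*$ and exploiting the $B$-field/$\beta$-transform formalism, which makes the rank and kernel statements essentially tautological once one identifies $\exp_\eta(\beta)$ with a Dirac-geometric gauge transform of $\mathrm{graph}(\eta)$. Your approach is instead a self-contained linear-algebra computation in the splitting $V=G\oplus K$. The Dirac viewpoint is more conceptual and explains the name ``Dirac exponential''; your approach is more elementary and makes the dependence on $(\sigma,\mu,\tau)$ completely explicit, which is exactly what one needs for the applications later in the paper (e.g.\ Lemma~\ref{lem:vanishes}). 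Both are valid.
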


  \subsubsection{\underline{ An $L_\infty$-algebra associated to a bivector field}}
 \label{subsection: Koszul for bivectors}
\hfill \break   \vspace{-3mm}%skips a line

{Let $Z$ be a bivector field  on a manifold $M$.}
 The following results combines \cite[Prop. 3.5]{SZPre} and \cite[Cor. 3.9]{SZPre} {(upon the improvement of the latter obtained in the proof of \cite[Cor. 1.9]{SZDirac}}).

\begin{theorem}\label{thm:Z}
There is an $L_\infty[1]$-algebra structure on $\Omega(M)[2]$, whose only possibly non-vanishing multibrackets are $\lambda_1,\lambda_2,\lambda_3$, with the property that 
for all $2$-forms $\beta \in \Gamma(\mathcal{I}_Z)$ the following statements are equivalent:
\begin{enumerate}
\item $\beta$ is a Maurer-Cartan element of $(\Omega(M)[2],\lambda_1,\lambda_2,\lambda_3)$, 
\item the $2$-form $F(\beta)$ is closed.
\end{enumerate}
\end{theorem}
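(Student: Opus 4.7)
My plan is to realize $(\Omega(M)[2],\lambda_1,\lambda_2,\lambda_3)$ as the gauge of the standard Courant algebroid structure on $\T M := TM\oplus T^*M$ by the bivector $Z$, and then to deduce the Maurer--Cartan characterization from the geometric meaning of $F$. Throughout, I view $\Omega(M)$ as sections of $\wedge^\bullet T^*M$, and recall that $T^*M\subset \T M$ is a Lagrangian subbundle whose transverse Lagrangian complements are precisely the graphs of bivector fields, while graphs of $2$-forms are again Lagrangian.

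First, I would write down the three brackets explicitly. The unary bracket $\lambda_1$ is just the de Rham differential $d$ (of degree $+1$ after the shift). The binary bracket $\lambda_2$ is the Koszul-type bracket induced by $Z$, characterized on $1$-forms $\alpha_1,\alpha_2$ by
\[
\lambda_2(\alpha_1,\alpha_2)=\Lie_{Z^\sharp\alpha_1}\alpha_2-\Lie_{Z^\sharp\alpha_2}\alpha_1-d\bigl(Z(\alpha_1,\alpha_2)\bigr),
\]
and extended to $\Omega(M)$ as a graded biderivation. The ternary bracket $\lambda_3$ is built from the Schouten--Nijenhuis self-bracket $[Z,Z]$, and measures the failure of $Z$ to be Poisson; schematically it is obtained by contracting three forms against $[Z,Z]$. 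These are precisely the brackets appearing in \cite[Prop. 3.5]{SZPre} and \cite[Cor. 3.9]{SZPre}, so the higher Jacobi identities of an $L_\infty[1]$-algebra hold by those results.

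Second, I would check the Maurer--Cartan characterization. For $\beta\in\Gamma(\mathcal{I}_Z)$ the map $\id+Z^\sharp\beta^\sharp$ is fibrewise invertible, so $F(\beta)$ is well defined by \eqref{eq:alphabeta}. Geometrically, applying the $B$-field type gauge transformation coming from $Z$ sends $\graph(\beta)\subset \T M$ to $\graph\bigl(F(\beta)\bigr)$; since closedness of a $2$-form $\gamma$ is equivalent to involutivity of $\graph(\gamma)$ under the Courant bracket, $F(\beta)$ is closed if and only if the gauged graph is Courant-involutive. The content of \cite[Prop. 3.5]{SZPre}, as sharpened in \cite[Cor. 1.9]{SZDirac}, is exactly that this involutivity condition, pulled back through the gauge, is encoded by the MC equation $\lambda_1(\beta)+\tfrac12\lambda_2(\beta,\beta)+\tfrac16\lambda_3(\beta,\beta,\beta)=0$ in $(\Omega(M)[2],\lambda_1,\lambda_2,\lambda_3)$.

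The main obstacle is bookkeeping: verifying that the explicit ternary bracket $\lambda_3$ in \cite[Cor. 3.9]{SZPre}, with the sign and combinatorial corrections from \cite[Cor. 1.9]{SZDirac}, is the one needed to match $\tfrac16\lambda_3(\beta,\beta,\beta)$ with the cubic term arising when expanding $d\bigl(F(\beta)\bigr)$ via the geometric series $(\id+Z^\sharp\beta^\sharp)^{-1}=\sum_{n\ge 0}(-Z^\sharp\beta^\sharp)^n$. The conceptual statement is clean, but identifying these cubic terms cleanly is where the delicate part of the proof lies, and is precisely what is carried out in the cited references.
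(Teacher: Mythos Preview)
Your proposal is essentially correct and aligns with how the paper treats this result: the paper does not give an independent proof but states the theorem as a review result, citing \cite[Prop.~3.5, Cor.~3.9]{SZPre} together with the refinement in \cite[Cor.~1.9]{SZDirac}. Your sketch accurately summarizes the Dirac-geometric argument underlying those references --- namely that the bivector gauge $e^Z$ of $\T M$ carries $\graph(\beta)$ to $\graph(F(\beta))$, so closedness of $F(\beta)$ (i.e.\ Courant involutivity of its graph) translates into the Maurer--Cartan equation for $\beta$ --- and your description of $\lambda_1,\lambda_2,\lambda_3$ matches the paper's Remark following Thm.~\ref{thm:Z}. One terminological quibble: since $Z$ is a bivector rather than a $2$-form, the automorphism of $\T M$ you invoke is a $\beta$-transform (bivector gauge) rather than a $B$-field transform, but the computation you outline is the correct one.
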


\begin{remark}
The differential $\lambda_1$ is just the de Rham differential. The binary bracket $\lambda_2$ is given (up to signs) by the Koszul bracket associated to $Z$. The trinary bracket $\lambda_3$ is obtained by contracting with $\frac{1}{2}[Z,Z]$, in particular it vanishes when $Z$ is a Poisson bivector field. We refer the interested reader to \cite{SZPre} for the details.
In this note we do not need the explicit formulae for these multibrackets.
\end{remark}

\subsubsection{\underline{The Koszul $L_\infty$-algebra of a pre-symplectic manifold}}
 \label{subsection: Koszul for pre-symplectic}
\hfill \break   \vspace{-3mm}%skips a line

{Now let $(M,\eta)$ be a pre-symplectic manifold.
We fix a subbundle $G$ such that $G\oplus K=TM$, where $K$ is the kernel
of $\eta$. Denote by $Z$ the bivector field on $M$ given by}
$ Z^\sharp = -(\eta\vert_G^\sharp)^{-1}$. The following is \cite[Thm. 3.17]{SZPre}.

\begin{theorem}\label{theorem: construction - Koszul L-infty}
The $L_{\infty}[1]$-algebra structure  
 on $\Omega(M)[2]$ associated to the bivector field $Z$, see Thm. \ref{thm:Z}, maps $\Omega_\hor(M)[2]$ to itself.
The subcomplex $\Omega_\hor(M)[2] \subset \Omega(M)[2]$ therefore inherits the structure of an
$L_\infty[1]$-algebra, which we call the {\em Koszul $L_\infty[1]$-algebra} of $(M,\eta)$. 
\end{theorem}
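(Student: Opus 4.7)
The plan is to show that the three multibrackets $\lambda_1,\lambda_2,\lambda_3$ of the $L_\infty[1]$-algebra on $\Omega(M)[2]$ associated to $Z$ all restrict to the subspace $\Omega_\hor(M)[2]$; the $L_\infty[1]$-axioms on the latter are then inherited tautologically from those on $\Omega(M)[2]$.

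The case $\lambda_1 = d$ is precisely the content of the short exact sequence \eqref{eq:ses}, which encodes the standard fact that $\Omega_\hor(M)$ is a subcomplex of the de~Rham complex when $K$ is involutive: Cartan's formula for $d\omega$ evaluated on sections of $K$ only produces evaluations of $\omega$ on iterated Lie brackets of such sections, which remain in $K$ by involutivity.

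For $\lambda_2$ and $\lambda_3$ the crucial structural input is that, by construction, $Z^\sharp = -(\eta\vert_G^\sharp)^{-1}$ takes values in $G$, so that $Z$ is a section of $\wedge^2 G \subset \wedge^2 TM$. I would work in adapted local coordinates $(x^a,y^i)$ with $\partial/\partial x^a$ spanning $G$ and $\partial/\partial y^i$ spanning $K$; then $Z = \tfrac{1}{2} Z^{ab}(x,y)\,\partial_{x^a}\wedge\partial_{x^b}$, and horizontality of a form $\omega$ is the condition that every monomial of $\omega$ in the basis $\{dx^a,dy^i\}$ contains at least one factor $dx^a$. Using the explicit formulas for $\lambda_2$ and $\lambda_3$ recalled from \cite{SZPre}, one then checks by direct computation that horizontality is preserved. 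The check reduces via the graded Leibniz-type properties of these brackets to verifying the claim on low-degree generators, in particular on $\lambda_2(\theta_1,\theta_2)$ for $\theta_1,\theta_2 \in \Gamma(K^\circ) = \Omega^1_\hor(M)$.

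The main obstacle I expect is precisely this explicit verification for $\lambda_2$: a priori the operators $d$ and $\iota_Z$ individually do not preserve $\Omega_\hor$, and one needs to exploit the specific signs in the Koszul-type combination to observe the required cancellations. The analogous computation for $\lambda_3$ then involves the same kind of bookkeeping together with contractions against $\tfrac{1}{2}[Z,Z]$, noting that the derivative components of $[Z,Z]$ transverse to $G$ are precisely compensated by the analogous terms in $\lambda_2 \circ \lambda_2$ by virtue of the higher Jacobi identity already verified on $\Omega(M)[2]$.
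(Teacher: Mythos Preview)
The paper does not prove this statement; it is quoted verbatim from \cite[Thm.~3.17]{SZPre}, so there is no proof here to compare against. I can therefore only comment on the internal soundness of your proposal.

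Your overall strategy---check that each $\lambda_i$ preserves $\Omega_\hor(M)$---is correct, and your treatment of $\lambda_1$ is fine. But the rest has a genuine gap: the only structural fact you invoke for $\lambda_2$ and $\lambda_3$ is that $Z\in\Gamma(\wedge^2 G)$, and this is \emph{not enough}. Take $M=\RR^3$, $K=\mathrm{Span}\{\partial_z\}$, $G=\mathrm{Span}\{\partial_x,\partial_y\}$, and $Z=f(x,y,z)\,\partial_x\wedge\partial_y$. Then for $\alpha=dx,\beta=dy\in\Gamma(K^\circ)$ the Koszul bracket satisfies $[\alpha,\beta]_Z(\partial_z)=\partial_z f$, which is nonzero in general. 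What rescues the situation is that $Z$ is not arbitrary: it arises as $Z^\sharp=-(\eta|_G^\sharp)^{-1}$ with $d\eta=0$, and in this example closedness of $\eta=-f^{-1}dx\wedge dy$ forces $\partial_z f=0$. So the condition $d\eta=0$ enters for $\lambda_2$ (and $\lambda_3$) in an essential way beyond the involutivity of $K$, and your outline does not use it. (A secondary issue: coordinates with $\partial_{x^a}$ spanning $G$ exist only if $G$ is involutive, which is not assumed; you need a frame, not coordinates.)

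Your argument for $\lambda_3$ has a further problem. Appealing to the higher Jacobi identity on $\Omega(M)[2]$ to cancel the ``transverse'' parts of $[Z,Z]$ against terms in $\lambda_2\circ\lambda_2$ does not make sense: the Jacobi identity constrains certain symmetrized combinations of the brackets, but knowing that $\lambda_1,\lambda_2$ preserve a subspace together with the Jacobi identity does \emph{not} force $\lambda_3$ to preserve it. The correct mechanism is again $d\eta=0$: since $Z^\sharp$ inverts $\eta|_G^\sharp$, the component $(d\eta)|_{\wedge^3G}=0$ translates into the vanishing of the $\wedge^3G$-component of $[Z,Z]$, so that $[Z,Z]\in\Gamma(\wedge^2G\wedge K)$. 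Contraction by such a trivector then lowers the ``horizontal filtration degree'' by $2$ rather than $3$, and since $\alpha\wedge\beta\wedge\gamma$ lies in $F^3$ for horizontal $\alpha,\beta,\gamma$, one lands in $F^1=\Omega_\hor$.
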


From the above one obtains the following main result \cite[Thm. 3.19]{SZPre}:

\begin{theorem}\label{theorem: main result}
Let $(M,\eta)$ be a pre-symplectic manifold. The choice of a complement $G$ to the kernel of $\eta$ determines a bivector field $Z$ by requiring $Z^\sharp=-(\eta^\sharp \vert_G)^{-1}$. 
Suppose $\beta$ is a $2$-form on $M$, which lies in $\mathcal{I}_Z$.
The following statements are equivalent:

\begin{enumerate}
\item $\beta$ is a Maurer-Cartan element of the Koszul \lione $\Omega_\hor(M)[2]$ of $(M,\eta)$,  
\item The image of $\beta$ under the map $\exp_\eta$ is a pre-symplectic structure
of the same rank as $\eta$.
\end{enumerate}
\end{theorem}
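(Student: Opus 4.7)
The plan is to deduce this theorem by assembling the three preceding results, without running any new multibracket computation. By Theorem \ref{theorem: construction - Koszul L-infty}, the Koszul $L_\infty[1]$-algebra is a sub-$L_\infty[1]$-algebra of the $L_\infty[1]$-structure on $\Omega(M)[2]$ associated to the bivector field $Z$. Consequently, a $2$-form $\beta \in \mathcal{I}_Z$ is Maurer-Cartan in the Koszul $L_\infty[1]$-algebra if and only if it satisfies both conditions: (a) $\beta \in \Omega_\hor^2(M)$ (equivalently, $r(\beta)=0$), and (b) $\beta$ is Maurer-Cartan in the ambient $L_\infty[1]$-algebra on $\Omega(M)[2]$. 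By Theorem \ref{thm:Z}, condition (b) is equivalent to $F(\beta)$ being closed. Thus the proof reduces to checking that (1) is equivalent to the conjunction of $\beta \in \ker(r)$ and $d F(\beta) = 0$, and that this conjunction is in turn equivalent to (2).

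For the implication (1) $\Rightarrow$ (2): assume $\beta$ is a Maurer-Cartan element of the Koszul $L_\infty[1]$-algebra. Then $r(\beta)=0$, so at every point $p \in M$ we have $\beta_p \in (K^*\otimes G^*) \oplus \wedge^2 G^*$. Theorem \ref{theorem: almost Dirac structures}(i), applied fiberwise, then shows that $\exp_\eta(\beta)$ has rank $k$ at every point of $M$. Furthermore $\exp_\eta(\beta) = \eta + F(\beta)$ is closed because $\eta$ is closed and $F(\beta)$ is closed by Theorem \ref{thm:Z}. Hence $\exp_\eta(\beta)$ is a pre-symplectic structure of the same rank as $\eta$.

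For the converse (2) $\Rightarrow$ (1): assume $\exp_\eta(\beta)$ is pre-symplectic of rank $k$. Fiberwise Theorem \ref{theorem: almost Dirac structures}(i), read in the contrapositive direction and combined with the hypothesis $\beta \in \mathcal{I}_Z$, forces $\beta_p \in \ker(r)$ at every point $p$; that is, $\beta \in \Omega_\hor^2(M)$. Closedness of $\exp_\eta(\beta) = \eta + F(\beta)$ together with closedness of $\eta$ gives $d F(\beta) = 0$, hence by Theorem \ref{thm:Z} the form $\beta$ is Maurer-Cartan in $(\Omega(M)[2],\lambda_1,\lambda_2,\lambda_3)$. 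Horizontality of $\beta$ upgrades this to a Maurer-Cartan element of the Koszul sub-$L_\infty[1]$-algebra, as desired.

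The actual technical content has already been packaged into the three cited theorems, so no genuine obstacle remains at this stage. The only points requiring care in writing up the argument are the correct identification of the sheaf-theoretic space $\Omega_\hor^2(M)$ with the pointwise kernel $\ker(r) = (K^* \otimes G^*) \oplus \wedge^2 G^*$, and the passage from the fiberwise rank statement of Theorem \ref{theorem: almost Dirac structures}(i) to the global constant-rank condition in the definition of a pre-symplectic form; both are immediate from the definitions but need to be invoked explicitly to bridge the algebraic Maurer-Cartan condition and the geometric pre-symplectic condition.
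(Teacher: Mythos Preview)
Your argument is correct, and it is precisely the intended assembly of the surrounding results. Note that in this paper the theorem is only \emph{stated} as a review item (it is \cite[Thm.~3.19]{SZPre}), so there is no proof here to compare against; but the deduction you give --- combining Theorem~\ref{theorem: construction - Koszul L-infty} (sub-$L_\infty[1]$-algebra), Theorem~\ref{thm:Z} (Maurer--Cartan $\Leftrightarrow$ $F(\beta)$ closed), and Theorem~\ref{theorem: almost Dirac structures}(i) (rank condition $\Leftrightarrow$ horizontality) --- is exactly how the result is meant to follow from the cited inputs, and matches the structure of the original proof in \cite{SZPre}.
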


{Here 
$$ \exp_\eta: \mathcal{I}_Z \cap ((K^*\otimes G^*)\oplus \wedge^2 G^*) \to (\wedge^2 T^*M)_k$$
is obtained assembling (for every tangent space of $M$) the maps introduced in Def. \ref{definition: Dirac exponential map}, followed by restriction.
By the above theorem, together with Thm. \ref{theorem: almost Dirac structures}, this map induces a {bijection}}
\begin{equation}\label{intro: parametrization}
\boxed{ \exp_\eta: \Gamma(\mathcal{I}_Z)\cap \MC(\eta) \to  
 \{\eta' \in \Presym^k(M)|\; \ker(\eta') \text{ is transverse to } G\}
}
\end{equation}
{where $\MC(\eta)$ denotes the Maurer-Cartan set of
the Koszul $L_{\infty}[1]$-algebra.}

\subsection{A strict morphism relating deformations}
\label{subsection: strict morphism}

For every pre-symplectic form there is an associated involutive distribution, namely its kernel, and hence by the Frobenius theorem there is also an associated foliation. Following \cite[\S4]{SZPre}, we now show that this can be viewed as the map on Maurer-Cartan elements induced by a  strict morphism of $L_\infty[1]$-algebras.

{Let $(M,\eta)$ be a pre-symplectic manifold, choose
a subbundle $G$ such that $G\oplus K=TM$ where $K=\ker(\eta^\sharp)$, and denote by $Z$ the bivector field  given by $ Z^\sharp = -(\eta\vert_G^\sharp)^{-1}$.}
The following is \cite[Prop. 4.6]{SZPre}, and states that there is a strict morphism from the $L_\infty[1]$-algebra governing the deformations of the pre-symplectic form $\eta$ to the $L_\infty[1]$-algebra governing the deformations of the involutive distribution $K$.  Here $F^{2}(\Omega(M)):=\Omega_\hor(M)\cdot \Omega_\hor(M)$ is an $L_{\infty}[1]$-ideal of $\Omega_\hor(M)[2]$, {we denote $\Omega(K,G):=\Gamma(\wedge K^*\otimes G)$, and similarly for $\Omega(K,G^*)$.}
\begin{theorem}\label{theorem: characteristic foliation}
 The composition 
$$\Omega_\hor(M)[2] \rightarrow \faktor{\Omega_\hor(M)[2]}{F^2(\Omega(M))[2]} \cong \Omega(K,G^*)[1] \stackrel{Z^{\sharp}[1]}{\longrightarrow} \Omega(K,G)[1]$$
is a strict morphism of $L_\infty[1]$-algebras,
where the domain is the Koszul $L_{\infty}[1]$-algebra with multibrackets 
$\lambda_1,\lambda_2,\lambda_3$, see Theorem \ref{theorem: construction - Koszul L-infty}, and
 the target $\Omega(K,G)[1]$ is endowed with the multibrackets
$l_1,-l_2,l_3$, see Prop. \ref{prop:Ji}.
\end{theorem}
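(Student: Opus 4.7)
My plan is to break the proof into three steps matching the three arrows in the composition. First, I would show that $F^2(\Omega(M))[2] = \Omega_\hor(M) \cdot \Omega_\hor(M)$ is an $L_\infty[1]$-ideal of the Koszul $L_\infty[1]$-algebra $\Omega_\hor(M)[2]$, so that the projection onto the quotient is automatically a strict morphism. Closure of $F^2$ under $\lambda_1 = d$ reduces to the statement that $\Omega_\hor(M)$ is itself a differential ideal of $\Omega(M)$, which is a reformulation of the involutivity of $K$. Closure under $\lambda_2$ (up to sign, the Koszul bracket attached to $Z$) and under $\lambda_3$ (contraction with $\tfrac{1}{2}[Z,Z]$) follows from the derivation property of these operations in each argument: applied to a product of two horizontal forms, the brackets yield sums of terms in which at least two horizontal factors survive.

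Second, I would unpack the canonical identification $\Omega_\hor(M)[2]/F^2(\Omega(M))[2] \cong \Omega(K,G^*)[1]$, realized by restriction along $\wedge K \hookrightarrow \wedge TM$ together with the isomorphism $K^\circ \cong G^*$ induced by the splitting $TM = K \oplus G$; the shifts align so that this is grading-preserving. On the level of $\lambda_1$, I would check that the induced differential on $\Omega(K,G^*)[1]$ is the Bott-connection differential. Composing with $Z^\sharp[1]$ then lands in the Bott-connection differential on $\Omega(K,G)[1]$, which is exactly $l_1$; here the crucial fact is that $Z^\sharp$ intertwines the Bott connections on $G^*$ and $G$. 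This in turn follows from $d\eta = 0$: unwinding the Cartan formula for $d\eta(X,Y_1,Y_2)$ with $X \in \Gamma(K)$ and $Y_i \in \Gamma(G)$ shows that $\eta\vert_G$ is Bott-parallel, and hence so is its inverse up to sign, namely $Z^\sharp$.

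Third, I would carry out the analogous comparison for $\lambda_2$ and $\lambda_3$, rewriting the induced Koszul multibrackets on the quotient and using $Z^\sharp = -(\eta\vert_G^\sharp)^{-1}$ to convert contractions with $Z$ into projections onto $G$ of the type appearing in $l_2$ and $l_3$. I expect the main obstacle to be the matching of the trinary brackets: $\lambda_3$ is defined implicitly through the Schouten square $[Z,Z]$, whereas $l_3$ is given by an explicit cyclic sum indexed by $(l,m,k-1)$-unshuffles involving nested $\mathrm{pr}_K$ evaluations. Reconciling the two will require carefully expanding $[Z,Z]$ into a combinatorial expression, while keeping track of the signs produced by the degree shifts $[2]$ versus $[1]$, the cyclic symmetrization $\circlearrowleft$, and the unshuffle signs $(-1)^\tau$. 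Once this bookkeeping is handled, the compatibility for $\lambda_2$ should reduce to a more direct Leibniz-type calculation, completing the verification that the composition is a strict morphism.
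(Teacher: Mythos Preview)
The paper does not actually prove this theorem: it is quoted verbatim from \cite[Prop.~4.6]{SZPre} as background material in the review section, with no argument given here. So there is no ``paper's own proof'' to compare your proposal against.

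That said, your three-step outline is the natural strategy and matches how one would expect the proof in \cite{SZPre} to be organized: establish that $F^2(\Omega(M))[2]$ is an $L_\infty[1]$-ideal (so the quotient inherits the structure and the projection is strict), identify the quotient with $\Omega(K,G^*)[1]$ via the splitting, and then verify that $Z^\sharp[1]$ intertwines the induced multibrackets with $l_1,-l_2,l_3$. Your reasoning for $\lambda_1$ (involutivity of $K$ makes $\Omega_\hor(M)$ a differential ideal; $d\eta=0$ makes $\eta\vert_G$, hence $Z^\sharp$, Bott-parallel) is correct. Your caveat about the trinary bracket is well placed: the honest work in this theorem is precisely the sign-and-unshuffle bookkeeping you describe in step three, and your proposal stops short of carrying it out. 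As written it is a plan, not a proof; to complete it you must actually expand the Koszul bracket and the $\tfrac12[Z,Z]$-contraction modulo $F^2$, then match them term-by-term against the formulae for $l_2$ and $l_3$ displayed after Prop.~\ref{prop:Ji}.
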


 \addtocontents{toc}{\protect\mbox{}\protect}
\section{\textsf{Equivalences}}\label{section: equiv}

All the original results of this note are contained in this section.  In  \S\ref{subsection:equideffol}
 we  show that the gauge equivalence of foliations essentially agrees with the geometric notion of equivalence given by isotopies, at least in the compact case 
(see Thm. \ref{thm:isotopyMCfol}).
In the long \S\ref{subsec:obspre} 
we obtain an analog result for pre-symplectic forms 
(see Thm. \ref{thm:isotopyMC}).

\subsection{Gauge equivalence}
\label{subsec:algequiv}

On the set of Maurer-Cartan elements of an $L_\infty[1]$-algebra there is a canonical equivalence relation. We recall it now for $L_\infty[1]$-algebras $W$  for which only the first three multibrackets
$\lambda_1,\lambda_2,\lambda_3$ might be non-zero  {and, as in the two cases of interest for this note, which   satisfy the following conditions: $W$ consists of smooth sections of a vector bundle, and each multibracket is a multidifferential operator.}

 \begin{definition}\label{def:eqMC}
 Two Maurer-Cartan elements $\beta_0$ and $\beta_1$ 
 are \emph{gauge-equivalent} if there is 
 \begin{itemize}
 \item a  smooth  one-parameter family $(\beta_t)_{t\in [0,1]}$ of {Maurer-Cartan elements}  of $W$, agreeing with the given ones at $t=0$   and $t=1$, 
\item a smooth one-parameter family $(\alpha_t)_{t\in [0,1]}$ of degree $-1$ elements of $W$,
\end{itemize}  
  such that
\begin{equation}\label{eq:equivMC}
\frac{\partial}{\partial t} \beta_{t}=\lambda_1(\alpha_{t})+ \lambda_2(\alpha_{t},\beta_{t} )+\frac{1}{2}\lambda_3(\alpha_t,\beta_{t},\beta_{t}).
\end{equation}
\end{definition}
As indicated by the name, gauge equivalences define an equivalence relation on the set of Maurer-Cartan elements of $(W,\lambda_1,\lambda_2,\lambda_3)$.
{This equivalence relation is also called ``homotopy equivalence'' in the literature. It can be also phrased as follows:
two Maurer-Cartan elements  of $W$ 
 are equivalent if{f} they are of the form $B|_{(t=0, dt=0)}$ and $B|_{(t=1, dt=0)}$
 for some  Maurer-Cartan element $B$ of the $L_{\infty}[1]$-algebra
 $W\otimes \Omega^{\bullet}([0,1])$, where $\Omega^{\bullet}([0,1])$ denotes the cochain complex of differential forms on the unit interval.
}
{The differential on $W\otimes \Omega^\bullet([0,1])$ is given by the sum of $\lambda_1$ and the de Rham differential, while $\lambda_2$ and $\lambda_3$ are extended to $W\otimes \Omega^\bullet([0,1])$ in a $\Omega^\bullet([0,1])$-linear fashion with the help of the wedge product on differential forms.}

\subsection{Equivalences of  foliations}
 \label{subsection:equideffol}
Given an involutive  distribution on $M$,  choose a complement $G$.
We saw in Prop. \ref{prop:Ji} that there is an $L_{\infty}[1]$-algebra   
whose Maurer-Cartan elements correspond bijectively 
to involutive distributions transverse to $G$. In this subsection 
we relate the induced gauge equivalence with the geometric notion of equivalence given by isotopies, see Thm. \ref{thm:isotopyMCfol} and the text following it.

\subsubsection{\underline{Isotopic foliations}}
\label{subsubsec:isotopicfol} 
 \hfill \break   \vspace{-3mm}%skips a line

We  introduce the following geometric  notion of equivalence for involutive distributions on a manifold $M$.

\begin{definition}\label{definition: isotopy of fol}
The group   $\Diff_0(M)$ of {diffeomorphisms isotopic to the identity} acts on 
the set of involutive distributions on $M$ from the left, as follows: $f$ maps   $D$ to $f_*(D)$.

We call two involutive distributions $D$ and $\tilde{D}$ \emph{isotopic} if they lie in the same orbit of this action.
\end{definition}

\subsubsection{\underline{Gauge-equivalences  of foliations}}\label{subsubsection:gaugefol}
\hfill \break   \vspace{-3mm}%skips a line
 
We fix  an involutive distribution $K$ on a manifold $M$, and a complement $G$.
In Prop. \ref{prop:Ji} we presented  an $L_{\infty}[1]$-algebra   $(\Gamma(\wedge K^*\otimes G)[1],l_1,-l_2,l_3)$ governing the deformations of the involutive distribution $K$. 

By Def. \ref{def:eqMC}, two  Maurer-Cartan elements $\Phi_0, \Phi_1$ are gauge equivalent if there is 
 a smooth    family $(\Phi_t)_{t\in [0,1]} \subset \Gamma(K^*\otimes G)$ of {Maurer-Cartan elements} interpolating between them and a smooth  family $(Y_t)_{t\in [0,1]}$ in $\Gamma(G)$
such that
$$
\frac{d}{dt}{\Phi_t}=l_1(Y_t)-l_2(Y_t,{\Phi_t})+\frac{1}{2}l_3(Y_t,{\Phi_t},{\Phi_t}).
$$
Using the formulae given after Prop. \ref{prop:Ji}, one sees that the above equation reads as follows, for all sections ${X}$ of $K$:
\begin{equation}\label{eq:ddtphi}
\frac{d}{dt}{\Phi_t}({X})=-\mathrm{pr}_G[Y_t,{X}+{\Phi_t}({X})]+{\Phi_t}(\mathrm{pr}_K[Y_t,{X}+{\Phi_t}({X})]).
\end{equation}
  We give an equivalent characterization of this equation in terms of the product manifold $M\times [0,1]$.

\begin{lemma}\label{lem:productfol}
Let $(\Phi_t)_{t\in [0,1]}$ be a smooth one-parameter family in $\Gamma(K^*\otimes G)$ and $(Y_t)_{t\in [0,1]}$ a  smooth  one-parameter family in $\Gamma(G)$. Denote by $D$   the distribution on $M\times [0,1]$ given by 
 \begin{equation*}
D_{(p,t)}:=\{v+\Phi_t(v): v\in K_p\}\oplus \mathrm{Span}(\frac{\partial}{\partial t}+Y_t|_{p}).
\end{equation*}
 Then $D$ is involutive if{f}
$\Phi_t$ consists of Maurer-Cartan elements and 
  the differential equation  \eqref{eq:ddtphi} is satisfied.
\end{lemma}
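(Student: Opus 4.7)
The plan is to test involutivity of $D$ on a convenient set of local generators and identify the two independent conditions it produces. Over a chart on $M$, fix a local frame $\{e_1,\dots,e_k\}$ of $K$; treating each $e_i$ as a $t$-independent vector field on $M\times [0,1]$, the distribution $D$ is locally spanned by the sections $e_i+\Phi_t(e_i)$, $i=1,\dots,k$, together with $\partial_t+Y_t$. Since involutivity is preserved by $C^\infty(M\times[0,1])$-linear combinations, $D$ is involutive iff all pairwise Lie brackets of these local generators lie in $D$.

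First I would treat the brackets of type $[e_i+\Phi_t(e_i),\, e_j+\Phi_t(e_j)]$. These vector fields have no $\partial_t$-component, so their Lie bracket on $M\times[0,1]$ also has no $\partial_t$-component, and at each fixed $t$ it coincides with the ordinary Lie bracket on $M$ of two sections of the graph of $\Phi_t:K\to G$. Hence these brackets lie in $D$ iff the graph of $\Phi_t$ is an involutive distribution on $M$ for every $t\in[0,1]$; by Prop.~\ref{prop:Ji} this is equivalent to each $\Phi_t$ being a Maurer-Cartan element of $(\Gamma(\wedge K^*\otimes G)[1],l_1,-l_2,l_3)$.

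Next I would compute the mixed brackets. Using $[e_i,\partial_t]=0$, the identity $[\partial_t,\Phi_t(e_i)]=\dot\Phi_t(e_i)$, and the ordinary bracket on $M$, one obtains
\begin{equation*}
[e_i+\Phi_t(e_i),\, \partial_t+Y_t] \;=\; -\dot\Phi_t(e_i) \;-\; [Y_t,\, e_i+\Phi_t(e_i)],
\end{equation*}
which is horizontal. It lies in $D$ iff it has the form $v+\Phi_t(v)$ for some $v\in K$; decomposing along $TM=K\oplus G$ forces $v=-\mathrm{pr}_K[Y_t,\,e_i+\Phi_t(e_i)]$ from the $K$-component, and matching the $G$-components then reproduces pointwise the equation \eqref{eq:ddtphi} with $X=e_i$. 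Since \eqref{eq:ddtphi} is $C^\infty(M)$-linear in the $X$-slot, the frame-wise version upgrades to all $X\in\Gamma(K)$. The remaining bracket $[\partial_t+Y_t,\,\partial_t+Y_t]$ is trivially zero.

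The only delicate point is bookkeeping: one must carefully keep track of the $\partial_t$-components and of the $t$-dependence of $\Phi_t(e_i)$ and $Y_t$ when expanding the bracket on $M\times[0,1]$, and confirm that the two conditions extracted (involutivity of the graph of $\Phi_t$ on each slice, and the evolution equation \eqref{eq:ddtphi}) are independent and together exhaust the involutivity of $D$. Beyond this, no input is required apart from the standard bracket calculus and Prop.~\ref{prop:Ji}.
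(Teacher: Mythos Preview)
Your proposal is correct and follows essentially the same approach as the paper's proof: both split the involutivity check into the ``slice'' brackets (yielding the Maurer--Cartan condition via Prop.~\ref{prop:Ji}) and the mixed bracket with $\partial_t+Y_t$ (yielding eq.~\eqref{eq:ddtphi}), using that the latter is horizontal to reduce membership in $D$ to membership in $\mathrm{graph}(\Phi_t)$. The only cosmetic difference is that you work with an explicit local frame and then invoke $C^\infty(M)$-linearity of \eqref{eq:ddtphi} to pass to arbitrary $X\in\Gamma(K)$, whereas the paper works directly with an arbitrary section $X$ from the start.
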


 \begin{remark}\label{rem:graphkr}
 The distribution $D$ is the graph of the vector bundle  map 
$K\oplus \RR\to G$ (where both are viewed as subbundles of $T(M\times [0,1])$) which, at points of $M\times \{t\}$, is the sum of $\Phi_t$ and of the map $1 \mapsto Y_t$.
\end{remark} 

\begin{proof}
Taking sections of $D$ lying in its first or second summand, and using the projection $M\times [0,1] \to [0,1]$, one sees that
the distribution $D$ is involutive if{f} the following two conditions are satisfied: $\mathrm{graph}(\Phi_t)$ is involutive for all $t$,  and for all $X\in \Gamma(K)$   the vector field
\begin{equation}\label{eq:bracketMR}
  [X+\Phi_t(X), \frac{\partial}{\partial t}+Y_t]
\end{equation} on $M\times [0,1]$ lies in $D$. 
Since the vector field   \eqref{eq:bracketMR} projects to zero  under $M\times [0,1] \to [0,1]$, it has no $ \frac{\partial}{\partial t}$-component,   hence the second condition is equivalent to   \eqref{eq:bracketMR} lying in $\mathrm{graph}(\Phi_t)$. This means exactly that the differential equation  \eqref{eq:ddtphi} is satisfied, as one sees writing out \eqref{eq:bracketMR} as $[X+\Phi_t(X), Y_t]- \frac{\partial}{\partial t}\Phi_t(X)$ and noticing that the last term lies in $G$. 
\end{proof}

\begin{proposition}\label{prop:explicitpush}
Let $(\Phi_t)_{t\in [0,1]}$ be any smooth one-parameter family of Maurer-Cartan elements of $\Gamma(K^*\otimes G)$ and $(Y_t)_{t\in [0,1]}$ a    one-parameter family in $\Gamma(G)$, such that  equation  \eqref{eq:ddtphi} is satisfied. Denote by $(\varphi_t)_{t\in [0,1]}$ the flow of $(Y_t)_{t\in [0,1]}$.
We have
$$\mathrm{graph}({\Phi_t})=(\varphi_t)_*(\mathrm{graph(\Phi_0)})$$
at all times $t\in [0,1]$ for which the flow is defined.
\end{proposition}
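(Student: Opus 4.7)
The plan is to lift everything to the product $M\times[0,1]$ and use the elementary fact that an involutive distribution is invariant under the flow of any of its sections. By Lemma \ref{lem:productfol}, the hypotheses that each $\Phi_t$ be Maurer-Cartan and that \eqref{eq:ddtphi} hold together say exactly that the distribution
$$D_{(p,t)}=\{v+\Phi_t(v):v\in K_p\}\oplus\mathrm{Span}\bigl(\tfrac{\partial}{\partial t}+Y_t|_p\bigr)$$
on $M\times[0,1]$ is involutive. Let $\tilde Y:=\tfrac{\partial}{\partial t}+Y_t$, viewed as an honest (time-independent) vector field on the product manifold, and let $\tilde\psi_s$ denote its flow. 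A direct computation gives $\tilde\psi_s(p,0)=(\varphi_s(p),s)$ whenever the right-hand side makes sense.

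Since $\tilde Y$ is by construction a section of $D$ and $D$ is involutive, the standard fact that $[\tilde Y,\Gamma(D)]\subset\Gamma(D)$ yields $(\tilde\psi_t)_*D_{(p,0)}=D_{(\varphi_t(p),t)}$ for all $t$ in the domain of the flow. I will then decode both sides. On the one hand, the flow of a vector field transports the vector field to itself, so $(\tilde\psi_t)_*\bigl(\tfrac{\partial}{\partial t}+Y_0|_p\bigr)=\tfrac{\partial}{\partial t}+Y_t|_{\varphi_t(p)}$. On the other hand, $\tilde\psi_t$ maps $M\times\{0\}$ diffeomorphically onto $M\times\{t\}$ via $\varphi_t$, so it sends the horizontal subspace $T_pM\subset T_{(p,0)}(M\times[0,1])$ to $T_{\varphi_t(p)}M$ through $(\varphi_t)_*$. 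Consequently
$$(\tilde\psi_t)_*D_{(p,0)}=(\varphi_t)_*\mathrm{graph}(\Phi_0)_p\,\oplus\,\mathrm{Span}\bigl(\tfrac{\partial}{\partial t}+Y_t|_{\varphi_t(p)}\bigr).$$

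To extract the desired equality on $M$, I will intersect both expressions for $D_{(\varphi_t(p),t)}$ with $T_{\varphi_t(p)}M\subset T_{(\varphi_t(p),t)}(M\times[0,1])$, characterized as the kernel of the projection onto $\RR\cdot\tfrac{\partial}{\partial t}$. Since the line spanned by $\tfrac{\partial}{\partial t}+Y_t|_{\varphi_t(p)}$ has nonzero $\tfrac{\partial}{\partial t}$-component while both $\mathrm{graph}(\Phi_t)_{\varphi_t(p)}$ and $(\varphi_t)_*\mathrm{graph}(\Phi_0)_p$ are horizontal, each intersection singles out precisely the ``graph part,'' and equating them gives the claim.

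The main obstacle is essentially bookkeeping: one has to be careful about the two different pushforwards (by $\tilde\psi_t$ on $M\times[0,1]$ and by $\varphi_t$ on $M$) and to verify cleanly that the $\tfrac{\partial}{\partial t}$-direction decouples from the horizontal part of $D$. Once Lemma \ref{lem:productfol} is in place and the flow-invariance of involutive distributions is invoked, the rest of the argument is essentially linear algebra at each point.
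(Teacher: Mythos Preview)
Your proposal is correct and follows essentially the same route as the paper: both arguments lift to $M\times[0,1]$, invoke Lemma \ref{lem:productfol} to get involutivity of $D$, observe that $\tfrac{\partial}{\partial t}+Y_t$ is a section of $D$ whose flow therefore preserves $D$, compute that flow along $M\times\{0\}$, and then read off the horizontal part. The only cosmetic difference is that the paper packages the last step via Remark \ref{rem:graphkr} rather than an explicit intersection with $T_{\varphi_t(p)}M$.
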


\begin{proof}
Consider the distribution $D$ appearing in Lemma \ref{lem:productfol}.

We make the following claim: 
For all $p\in M$ and times $t$ for which   $\varphi_t$ is defined, we have
\begin{equation}\label{eq:Dpt}
D_{(p,t)}=((\varphi_t)_*\mathrm{graph}(\Phi_0))_p\oplus \mathrm{Span}(\frac{\partial}{\partial t}+Y_t|_{p}).
\end{equation}
The claim implies the conclusion, using Remark \ref{rem:graphkr}.

We now prove the claim. The distribution $D$  is involutive, by  Lemma \ref{lem:productfol}. 
Consider the vector field $\bar{Y}$ on $M\times \RR$ given by $\bar{Y}_{(p,t)}=
\frac{\partial}{\partial t}+Y_t|_p$. It is a section of $D$, hence its flow preserves $D$.
In particular, the time $t$ flow $F_t$ of $\bar{Y}$ satisfies $D|_{M\times \{t\}}=(F_t)_*(D|_{M\times \{0\}})$. The r.h.s. agrees with the r.h.s. of eq. \eqref{eq:Dpt}, as can be computed easily using the fact that 
the flow $F_t$ reads $$(p,s)\mapsto (\varphi_{t+s}((\varphi_s)^{-1}(p)),s+t).$$
\end{proof}

 We can now prove the main statement of this subsection:
 
\begin{theorem}\label{thm:isotopyMCfol}
Let $M$ be a {compact}   manifold with an involutive distribution $K$, and fix a complement $G$. 
%Suppose $M$ is compact. 
Fix Maurer-Cartan elements $\Phi_0, \Phi_1$ of  $(\Gamma(\wedge K^*\otimes G)[1],l_1,-l_2,l_3)$.
 The following is equivalent:
\begin{itemize}
\item[(1)]   $\Phi_0$ and  $\Phi_1$ are   gauge equivalent
Maurer-Cartan elements  
of $(\Gamma(\wedge K^*\otimes G)[1],l_1,-l_2,l_3)$
\item[(2)] There is a diffeomorphism $\varphi$ of $M$ 
 such that $$\varphi_*(\mathrm{graph}(\Phi_0))=\mathrm{graph}(\Phi_1),$$ and so that $\varphi$
 is connected to the identity by an isotopy  $(\varphi_t)_{t\in [0,1]}$ with the property that, for all $t\in [0,1]$, 
 $(\varphi_t)_*(\mathrm{graph}(\Phi_0))$ is transverse to $G$.
\end{itemize}
\end{theorem}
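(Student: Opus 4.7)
The strategy is to bridge the two notions of equivalence via Lemma \ref{lem:productfol} and Proposition \ref{prop:explicitpush}, recasting everything on the product manifold $M\times[0,1]$.

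For the implication $(1)\Rightarrow(2)$, suppose $\Phi_0$ and $\Phi_1$ are gauge equivalent, so smooth families $(\Phi_t)$ and $(Y_t)$ satisfy \eqref{eq:ddtphi}. Since $M$ is compact, $Y_t$ integrates to a flow $\varphi_t$ defined for all $t\in[0,1]$. Proposition \ref{prop:explicitpush} then yields $(\varphi_t)_*(\graph(\Phi_0))=\graph(\Phi_t)$, and these graphs are transverse to $G$ by construction. Setting $\varphi:=\varphi_1$ completes this direction.

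For $(2)\Rightarrow(1)$, let $X_t:=\tfrac{d\varphi_t}{dt}\circ\varphi_t^{-1}$ be the time-dependent generator of the isotopy. The transversality hypothesis lets me define a smooth family $(\Phi_t)\subset\Gamma(K^*\otimes G)$ by requiring $\graph(\Phi_t)=(\varphi_t)_*(\graph(\Phi_0))$, and each $\Phi_t$ is a Maurer--Cartan element by Prop. \ref{prop:Ji}, since its graph is involutive as a pushforward of an involutive distribution. The key step is to produce, from the isotopy, a gauge parameter $Y_t\in\Gamma(G)$. I would consider the pushforward of the (manifestly involutive) product distribution $K\oplus\mathbb{R}\partial_t$ on $M\times[0,1]$ under the diffeomorphism $(p,t)\mapsto(\varphi_t(p),t)$; a direct computation shows this equals
\[
\tilde D_{(q,t)}=\graph(\Phi_t)_q\oplus\mathrm{Span}(\partial_t+X_t|_q),
\]
which is thus involutive. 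Pointwise transversality of $\graph(\Phi_t)$ to $G$ then permits a smooth decomposition $X_t=Y_t+Z_t$ with $Y_t\in\Gamma(G)$ and $Z_t\in\Gamma(\graph(\Phi_t))$. Absorbing $Z_t$ into the first summand of $\tilde D$ rewrites the same distribution in the form appearing in Lemma \ref{lem:productfol} with gauge parameter $Y_t$, and that lemma then yields \eqref{eq:ddtphi} for the pair $(\Phi_t,Y_t)$, producing the desired gauge equivalence.

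\textbf{Main obstacle.} The delicate point is $(2)\Rightarrow(1)$: the generator $X_t$ of an arbitrary isotopy need not take values in $G$, whereas the algebraic equivalence requires a gauge parameter in $\Gamma(G)$. The pointwise transversality assumed throughout $[0,1]$ in (2) is precisely what enables the clean splitting $X_t=Y_t+Z_t$; without it the correction $Z_t$ is not well-defined and the reduction to Lemma \ref{lem:productfol} fails. Compactness of $M$ is used only in the reverse direction, to guarantee completeness of the flow on the full interval $[0,1]$.
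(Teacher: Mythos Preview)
Your proof is essentially the paper's own: both use Proposition~\ref{prop:explicitpush} and compactness for $(1)\Rightarrow(2)$, and for $(2)\Rightarrow(1)$ both push forward a product distribution on $M\times[0,1]$ by $(p,t)\mapsto(\varphi_t(p),t)$ and then project the isotopy generator onto $G$ along $\graph(\Phi_t)$ to obtain the gauge parameter $Y_t$. One slip to fix: the distribution you push forward must be $\graph(\Phi_0)\oplus\mathbb{R}\,\partial_t$ rather than $K\oplus\mathbb{R}\,\partial_t$, since you need the first summand of the image to be $(\varphi_t)_*\graph(\Phi_0)=\graph(\Phi_t)$, not $(\varphi_t)_*K$ (these agree only when $\Phi_0=0$); with that correction your argument matches the paper's verbatim.
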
 
Thm. \ref{thm:isotopyMCfol} shows that 
 two involutive distributions transverse to $G$ are gauge equivalent if{f} they lie in the same connected component of 
$$\{\text{involutive distributions transverse to $G$}\}\cap  \text{(a $\Diff_0(M)$-orbit)}.$$

\begin{proof}
``$(1)\Rightarrow (2)$'': apply Prop. \ref{prop:explicitpush}, noticing that the compactness of $M$ assures that all flows are defined on $[0,1]$.

``$(2)\Rightarrow (1)$''. 
Consider the diffeomorphism 
$\bar{\varphi}$ of $M\times [0,1]$ given by 
$$(p,t)\mapsto (\varphi_t(p),t).$$
Consider the distribution on $M\times [0,1]$ given by $\mathrm{graph}(\Phi_0)\oplus \mathrm{Span}(\frac{\partial}{\partial t})$. It is an involutive distribution, {being the product of two involutive ones}. One computes that its push-forward by $\bar{\varphi}$ is 
the distribution which at each point $(p,t)\in M\times [0,1]$ reads
$$(\varphi_t)_*(\mathrm{graph}(\Phi_0)) \oplus \mathrm{Span}(\frac{\partial}{\partial t}+V_t|_{p})$$
where\footnote{Explicitly, $V_t(p)=\frac{d}{ds}|_{s=t}(\varphi_s(\varphi_t^{-1}(p)))$.} 
$V_t:=(\frac{d}{dt}\varphi_t)$. 
This distribution is exactly the distribution $D$ 
appearing in Lemma \ref{lem:productfol} associated to the data $(\Phi_t,Y_t)_{t\in [0,1]}$,
where 
\begin{itemize}
\item[]    $\Phi_t\in \Gamma( K^*\otimes G)$ is defined by the requirement that $\mathrm{graph}(\Phi_t)=(\varphi_t)_*(\mathrm{graph}(\Phi_0))$,
\item[]   $Y_t\in \Gamma(G)$ is the image of $V_t$ under the projection $TM\to G$ with kernel $(\varphi_t)_*(\mathrm{graph}(\Phi_0))$.
\end{itemize}
Hence $D$ is involutive. Using Lemma \ref{lem:productfol} we obtain $(1)$.
\end{proof}

  \begin{remark}\label{rem:folorbit} [On isotopies tangent to $G$]
 Given a diffeomorphism $\varphi$ as in Thm. \ref{thm:isotopyMCfol} (2), one can constrict a diffeomorphism $\varphi'$ with the same properties and so that the isotopy $(\varphi'_t)_{t\in [0,1]}$  in addition satisfies $(\frac{d}{dt}\varphi'_t)\in \Gamma(G)$. This geometric fact follows from the proof of the implication ``$(1)\Rightarrow (2)$'' in the above theorem.
\end{remark}

\begin{remark}
An involutive distribution $K$ on a manifold $M$ is a Lie subalgebroid of $TM$, hence the vector bundle $K$ acquires a Lie algebroid structure with injective anchor map.
Consider the DGLA of Crainic-Moerdijk \cite{MR2443928} governing deformations of this Lie algebroid structure. Its gauge-equivalences were determined by La Pastina-Vitagliano \cite[\S1]{PastinaVitagliano} when $M$ is compact, and read as follows: two Lie algebroid structures on $K$ with injective anchor are equivalent if{f} the images of their anchor maps are isotopic. The DGLA of Crainic-Moerdijk
 is $L_{\infty}$-quasi-isomorphic to the $L_{\infty}$-algebra of Prop. \ref{prop:Ji} \cite[App. C]{VitaglianoFol}. Following a suggestion by Luca Vitagliano, one might try to show that the gauge-equivalence classes of Maurer-Cartan elements of the former DGLA  are in bijection with those of the latter $L_{\infty}$-algebra, thus giving another approach to the deformation problem of foliations modulo isotopy. To do so, a necessary step is to check that the $L_{\infty}$-quasi-isomorphism relating them is sufficiently well-behaved as not to give rise to any  convergence problems.
\end{remark}

\subsection{Equivalences of pre-symplectic structures}
\label{subsec:obspre}

Given a pre-symplectic form $\eta$ on $M$ and a choice a complement to its kernel, we saw in \S\ref{subsection: Koszul for pre-symplectic} that there is an $L_{\infty}[1]$-algebra structure on $\Omega_\hor(M)[2]$ whose Maurer-Cartan elements parametrize pre-symplectic forms nearby $\eta$, via the Dirac exponential map $\beta\mapsto \exp_{\eta}(\beta)$ of Def. \ref{definition: Dirac exponential map}.
In this subsection we relate the gauge equivalence of Maurer-Cartan elements with the geometric notion of isotopic pre-symplectic forms, see Thm. \ref{thm:isotopyMC} and the text following it.
   
\subsubsection{\underline{Isotopic pre-symplectic structures}}
\label{subsubsec:isotopicpre}

 We start introducing a notion of equivalence of pre-symplectic structures on a manifold $M$, which is natural from the geometric point of view.
 
\begin{definition}\label{definition: isotopy of pre-symplectic forms}
The group  $\Diff_0(M)$ of diffeomorphisms isotopic to the identity  acts on 
$\Presym^k(M)$ from the right via $\eta \cdot f := f^* \eta$.

We call two pre-symplectic structures $\eta$ and $\tilde{\eta}$ \emph{isotopic} if they lie in the same orbit of this action, and then write $\eta \sim \tilde{\eta}$.

Furthermore, we denote the set of orbits by $\Presym^k(M)/\Diff_0(M)$.
\end{definition}

{We will need} a  reformulation of the equivalence relation $\sim$:

\begin{proposition}\label{prop:isotopy}
Suppose $M$ is compact.
Two pre-symplectic structures $\eta$ and $\tilde{\eta}$ on $M$ are isotopic, if and only if
there is a smooth one-parameter family of pre-symplectic structures $(\eta_t)_{t\in [0,1]}$ joining them, such that
the variation $\frac{d}{dt}\eta_t$ equals $d{\gamma}_t$, with ${\gamma}_t$ a section of $(\ker(\eta_t))^\circ$ depending smoothly on $t$.
\end{proposition}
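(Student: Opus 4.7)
The argument will use a Moser-type trick in both directions. The key observation is that a section $\gamma$ of $T^*M$ lies in $(\ker\eta)^\circ$ if{f} it is in the image of $\eta^\sharp\colon TM\to T^*M$, since $(\ker\eta)^\circ = \mathrm{Im}(\eta^\sharp)$ when $\ker\eta$ has constant rank; this will allow us to convert between primitives of $\frac{d}{dt}\eta_t$ and time-dependent vector fields.

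For the implication ``isotopic $\Rightarrow$ family with the stated variation'', I would choose $f\in\Diff_0(M)$ with $f^*\eta=\tilde\eta$ together with an isotopy $(f_t)_{t\in[0,1]}$ from $\id$ to $f$, and set $\eta_t:=f_t^*\eta$. Each $\eta_t$ is pre-symplectic of the same rank as $\eta$, and writing $X_t$ for the time-dependent vector field with $X_t\circ f_t = \frac{d}{dt}f_t$, a standard computation (using $d\eta=0$ and Cartan's formula) gives
\[
\tfrac{d}{dt}\eta_t \;=\; f_t^*(\mathcal{L}_{X_t}\eta) \;=\; d\bigl(f_t^*\iota_{X_t}\eta\bigr).
\]
Setting $\gamma_t := f_t^*\iota_{X_t}\eta = \iota_{Y_t}\eta_t$ with $Y_t:=(f_t)_*^{-1}X_t$, for any $v\in \ker(\eta_t)_p$ we get $\gamma_t(v)=\eta_t(Y_t,v)=0$, so $\gamma_t\in\Gamma((\ker\eta_t)^\circ)$, as required.

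For the converse, given $(\eta_t,\gamma_t)$, I would produce the desired isotopy by Moser's trick. Since each $\ker\eta_t$ has constant rank, $\eta_t^\sharp$ has constant rank, so its image is a smooth subbundle equal to $(\ker\eta_t)^\circ$. Picking (smoothly in $t$) a complement $G_t$ to $\ker\eta_t$, the restriction $\eta_t^\sharp|_{G_t}\colon G_t\to (\ker\eta_t)^\circ$ is a bundle isomorphism, so there is a smooth time-dependent vector field $X_t$ with $\iota_{X_t}\eta_t=-\gamma_t$. Compactness of $M$ ensures its flow $\varphi_t$ is defined for all $t\in[0,1]$, and the computation
\[
\tfrac{d}{dt}(\varphi_t^*\eta_t) \;=\; \varphi_t^*\bigl(\mathcal{L}_{X_t}\eta_t+\tfrac{d}{dt}\eta_t\bigr) \;=\; \varphi_t^*\bigl(d\iota_{X_t}\eta_t + d\gamma_t\bigr) \;=\; 0
\]
shows $\varphi_t^*\eta_t=\eta_0=\eta$ for all $t$; in particular $\varphi_1^*\tilde\eta=\eta$, and since $\varphi_1\in\Diff_0(M)$ this gives $\eta\sim\tilde\eta$.

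The main (minor) obstacle is the smooth selection of $X_t$ in the converse: one must argue that the surjection $\eta_t^\sharp\colon TM\to(\ker\eta_t)^\circ$ admits a smooth right inverse depending smoothly on $t$, which is where the constant rank hypothesis (and hence the pre-symplectic assumption, not just closedness) is used. Everything else is the standard Moser machinery, with compactness entering only to guarantee completeness of the flow on $[0,1]$.
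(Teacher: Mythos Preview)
Your proof is correct and follows essentially the same approach as the paper: both directions use the Moser trick, with the forward direction setting $\eta_t=f_t^*\eta$ and $\gamma_t=\iota_{(f_t^{-1})_*X_t}\eta_t$, and the converse solving $\iota_{X_t}\eta_t=-\gamma_t$ via a smoothly chosen complement to $\ker\eta_t$ (the paper phrases this as choosing a complement to the kernel bundle inside the pullback of $TM$ to $M\times[0,1]$) and then integrating. Your identification of the smooth right-inverse of $\eta_t^\sharp$ as the only nontrivial point, and of compactness as needed solely for completeness of the flow, matches the paper exactly.
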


{Above, $(\ker(\eta_t))^\circ\subset T^*M$ is the annihilator of $\ker(\eta_t)\subset TM$.}

\begin{proof}
Assume that $\eta$ and $\tilde{\eta}$ are isotopic via $(f_t)_{t\in [0,1]}$, in particular $f_1^*\eta=\tilde{\eta}$. Then the smooth one-parameter family $\eta_t:=(f_t)^*\eta$ satisfies the requirements of the proposition
since 
$$\frac{d}{dt}\eta_t = (f_t)^*(\Lie_{X_t}{\eta}) = d\iota_{(f_t^{-1})_*X_t}\eta_t,$$
and $\iota_{(f_t^{-1})_*X_t}\eta_t$ lies in $(\ker(\eta_t))^\circ$. Here $X_t$ is the time-dependent vector field associated to the isotopy.

One the other hand, if we are given a families $(\eta_t)_{t\in [0,1]}$ and $({\gamma}_t)_{t\in [0,1]}$ as specified in the proposition, we can apply Moser's trick. In more detail, we make the Ansatz
\begin{eqnarray*} 
0 = \frac{d}{dt}(g_t^*\eta_t) = g_t^*(d\iota_{X_t}\eta_t + \frac{d}{d t}\eta_t) = g_t^*d(\iota_{X_t}\eta_t + {\gamma}_t) ,
\end{eqnarray*}
for the isotopy $(g_t)_{t\in [0,1]}$ generated by $X_t$.
Now we can find a one-parameter family of vector fields $X_t$ such that
$$ \iota_{X_t}\eta_t + {\gamma}_t = 0,$$
since ${\gamma}_t$ lies in the image of $\eta_t^{\sharp}$.
Observe that the kernels of $\eta_t$, {as $t$ ranges over $[0,1]$}, form a vector bundle over $M\times [0,1]$ and we can choose
a complementary subbundle to it inside the pull-back of $TM$. Requiring that $X_t$ takes values in this subbundle uniquely
determines the one-parameter family $X_t$ (this shows in particular that $X_t$ can be chosen in a smooth manner).
Since $M$ is compact, $(X_t)_{t\in [0,1]}$ will integrate to an isotopy $(g_t)_{t\in [0,1]}$. Setting $f_t:=g_t^{-1}$ yields the desired isotopy satisfying $\eta_t=f_t^*\eta$, in particular $\tilde{\eta}=f_1^*\eta$.
\end{proof}
  
\begin{remark}\label{remark: tangent space to pre-symplectic moduli}
Let us determine the formal tangent space to $\Presym^k(M)/\Diff_0(M)$
at the equivalence class of $\eta$.
By Remark \ref{rem:tpres}, the formal tangent space of $\Presym^k(M)$ at $\eta$ can be identified with the closed $2$-forms on $M$ whose restriction to $K=\ker(\eta)$ is zero.
On the other hand, by Prop. \ref{prop:isotopy}, the equivalence class of $\eta$ is infinitesimally
modelled by $d \beta$ for $\beta \in \Gamma(K^\circ)$.
As the quotient of these two vector spaces, and hence as the candidate for
$T_{[\eta]}\left(\Presym^k(M)/\Diff_0(M)\right)$, we therefore
find $H^2_\hor(M)$.
\end{remark}

\subsubsection{\underline{Gauge-equivalences of two-forms}}\label{subsubsection:gaugetwoforms}
\hfill \break   \vspace{-3mm}%skips a line

Fix a bivector field $Z$ on $M$.
Recall that associated to $Z$,  we constructed an $L_\infty[1]$-algebra
structure on $\Omega(M)[2]$ with structure maps $\lambda_1$, $\lambda_2$ and $\lambda_3$, in Thm. \ref{thm:Z}. Here we give a characterization of when two sufficiently small Maurer-Cartan elements
are gauge equivalent  in terms of the {map $F$ introduced in eq. \eqref{eq:alphabeta}, see   Prop. \ref{prop:bijequiv} below.
{This will be used to prove Thm. \ref{thm:isotopyMC}, which gives a geometric characterization of when   two sufficiently small Maurer-Cartan elements of $\Omega_\hor(M)[2]$
are gauge equivalent. In the latter statement, ``sufficiently small'' means that the two  Maurer-Cartan elements take value in the neighborhood $\mathcal{I}_Z$ of the zero section of $\wedge^2 T^*M$  introduced before eq. \eqref{eq:alphabeta},  and can be joined by a smooth curve of  Maurer-Cartan elements taking values in  $\mathcal{I}_Z$.
}

 For given Maurer-Cartan elements $\beta_0,\beta_1\in \Omega^2(M)[2]$, we defined gauge equivalence 
in Def. \ref{def:eqMC}, requiring the existence of smooth  one-parameter families $(\beta_t)_{t\in [0,1]}\subset \Omega^2(M)[2]$ of Maurer-Cartan elements   and $(\alpha_t)_{t\in [0,1]}\subset \Omega^1(M)[2]$ satisfying eq. \eqref{eq:equivMC}.
We characterize this in terms of the product manifold $M\times [0,1]$, just as we did in Lemma \ref{lem:productfol}.

\begin{lemma}\label{rem:MCprod}
Two Maurer-Cartan elements $\beta_0$ and $\beta_1$
are  gauge equivalent if{f} there is
\begin{itemize}
\item a smooth one-parameter family $(\beta_t)_{t\in [0,1]}$ in $\Omega^2(M)[2]$ joining $\beta_0$ to $\beta_1$
\item a smooth one-parameter family $(\alpha_t)_{t\in [0,1]}$ in $\Omega^1(M)[2]$
\end{itemize}
 such that\footnote{Explicitly, at a point $(p,t)\in M\times [0,1]$ the above 2-form reads
  $(\beta_t)_p+dt \wedge (\alpha_t)_p$.}
 $$ \beta_t + dt\wedge \alpha_t  \in \Omega^2(M\times [0,1])[2]$$
is a Maurer-Cartan element in the   $L_{\infty}[1]$-algebra 
$$(\Omega(M\times [0,1])[2], \lambda_1 + d_{[0,1]},\lambda_2,\lambda_3).$$ Here
the operator $d_{[0,1]}$ is the de Rham differential on the interval $[0,1]$, while
$\lambda_i$ denotes the $\Omega([0,1])$-linear extensions of $\lambda_i$ from
$\Omega(M)$ to $\Omega(M\times [0,1])$, and the Koszul sign convention is understood.  
\end{lemma}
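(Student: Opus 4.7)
The plan is to verify both implications simultaneously by expanding the Maurer-Cartan equation for $B := \beta_t + dt\wedge\alpha_t$ inside the product $L_\infty[1]$-algebra and then decomposing the resulting identity according to the splitting $\Omega(M\times[0,1]) = \Omega(M) \oplus (dt\wedge\Omega(M))$ induced by the projection $M\times[0,1]\to M$. Since $\alpha_t$ has degree $-1$ in $\Omega(M)[2]$ and $dt$ has degree $+1$ in $\Omega^{\bullet}([0,1])$, the element $B$ has degree $0$ in $\Omega(M\times[0,1])[2]$, as required of a Maurer-Cartan element.

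The key step is to compute each summand of
$$(\lambda_1 + d_{[0,1]})(B) + \tfrac{1}{2}\lambda_2(B,B) + \tfrac{1}{6}\lambda_3(B,B,B) = 0$$
using the $\Omega^\bullet([0,1])$-linearity of $\lambda_2,\lambda_3$ (with Koszul signs) and the fact that $dt\wedge dt = 0$. The $dt$-free contribution collapses to
$$\lambda_1(\beta_t) + \tfrac{1}{2}\lambda_2(\beta_t,\beta_t) + \tfrac{1}{6}\lambda_3(\beta_t,\beta_t,\beta_t) = 0,$$
which is exactly the pointwise Maurer-Cartan condition on $\beta_t$ inside $\Omega(M)[2]$. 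The $dt$-bearing contribution only receives $d_{[0,1]}(\beta_t) = dt\wedge\partial_t\beta_t$, the term $\lambda_1(dt\wedge\alpha_t)$, and the cross terms of the quadratic and cubic brackets (all other terms vanish because they contain a factor $dt\wedge dt$). Graded symmetry combines the two cross terms in $\lambda_2(B,B)$ into a single one (absorbing the $\tfrac{1}{2}$) and the three cross terms in $\lambda_3(B,B,B)$ into one with coefficient $\tfrac{1}{2}$; stripping the common factor of $dt$ then yields exactly equation \eqref{eq:equivMC}.

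Comparing these two conclusions with Definition \ref{def:eqMC} establishes the lemma in both directions at once: the smooth paths $(\beta_t),(\alpha_t)$ witnessing a gauge equivalence assemble to a Maurer-Cartan element $B$ of the prescribed form, and conversely any such $B$ yields these two paths by the above decomposition. The only genuine difficulty is the careful tracking of Koszul signs, both when passing $dt$ (of degree $+1$) past the shifted multibrackets $\lambda_2,\lambda_3$ (themselves of degree $+1$) and when invoking graded symmetry to combine cross terms in $\lambda_2(B,B)$ and $\lambda_3(B,B,B)$. No further analytic input is required; in particular, no compactness assumption on $M$ enters at this stage.
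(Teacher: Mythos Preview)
Your proposal is correct and follows exactly the approach the paper has in mind: the paper's own proof is the single sentence ``By a straightforward computation, $\beta_t + dt\wedge \alpha_t$ is a Maurer-Cartan element if{f} $(\beta_t)_{t\in [0,1]}$ and $(\alpha_t)_{t\in [0,1]}$ satisfy eq.~\eqref{eq:equivMC}'', and what you have written is precisely that computation spelled out---splitting the Maurer-Cartan equation for $B$ into its $dt$-free part (the pointwise MC condition on $\beta_t$) and its $dt$-part (equation~\eqref{eq:equivMC}), using $dt\wedge dt=0$ and graded symmetry to collect the cross terms.
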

 \begin{proof}
By a   straightforward computation,  $\beta_t + dt\wedge \alpha_t$ is a Maurer-Cartan element if{f}
 $(\beta_t)_{t\in [0,1]}$ and  $(\alpha_t)_{t\in [0,1]}$  satisfy eq. \eqref{eq:equivMC}.
\end{proof}

 Consider the manifold $M\times [0,1]$, equipped with the  bivector field
$\widetilde{Z}$ defined by $\widetilde{Z}_{(p,t)}=(Z_p,0)\in T_{(p,t)}( M\times [0,1])$.
Denote by $\widetilde{F}\colon \mathcal{I}_{\widetilde{Z}} \to \mathcal{I}_{-\widetilde{Z}}$ the bijection $\gamma \mapsto \gamma^\sharp(\id + \widetilde{Z}^{\sharp}\gamma^\sharp)^{-1}$ between neighborhoods of the origin in 
$\Omega^2(M\times [0,1])$, as in eq. \eqref{eq:alphabeta}.
The $L_\infty[1]$-algebra structure associated to $\widetilde{Z}$ by Thm. \ref{thm:Z} is exactly
$(\Omega(M\times [0,1])[2],\lambda_1 + d_{[0,1]},\lambda_2,\lambda_3)$  as
introduced in Lemma \ref{rem:MCprod}.   
Given a Maurer-Cartan element $\beta_t + dt\wedge \alpha_t \in \Omega(M\times [0,1])[2]$ 
lying in $\mathcal{I}_{\widetilde{Z}}$,
we know by Thm. \ref{thm:Z} that the image  
 \begin{equation}\label{eq:psiMxI}
 \widetilde{F}(\beta_t + dt\wedge \alpha_t) =: \widehat{\beta}_t +  dt\wedge \widehat{\alpha}_t
\end{equation}
 is closed with respect to the de Rham differential, which in turn amounts to
 \begin{equation}\label{eq:psiMxIrhs} d\widehat{\beta}_t=0 \qquad \textrm{and} \qquad \frac{\partial \widehat{\beta}_t}{\partial t} = d \widehat{\alpha}_t.
 \end{equation}
Notice that $\beta_t + dt\wedge \alpha_t \in \Omega(M\times [0,1])$ lies in $\mathcal{I}_{\widetilde{Z}}$ if{f} $\beta_t$ lies in $\mathcal{I}_Z$ for all $t$, see  \S\ref{subsection: Dirac parametrization}.  
 {Since $\widetilde{F}$} maps   $\mathcal{I}_{\widetilde{Z}}$ bijectively onto  $\mathcal{I}_{-\widetilde{Z}}$, it follows that $\widehat{\beta}_t$ lies in $\mathcal{I}_{-Z}$ for all $t$.
 
We compute explicitly the r.h.s. of eq. \eqref{eq:psiMxI}.
 
\begin{lemma}\label{proposition: algebraic map = geometric map II}
For  a family $\alpha_t$ of  $1$-forms on $M$ and   a family $\beta_t$ of  $2$-forms lying in $\mathcal{I}_Z$ we have  
$$\widetilde{F}(\beta_t + dt\wedge \alpha_t)= F(\beta_t) + dt \wedge F'(\beta_t,\alpha_t) \quad \in \Omega^2(M\times [0,1]).$$
Here for any  pair $(\beta,\alpha)\in \Gamma(\mathcal{I}_Z)\times \Omega^1(M)$ we denote
\begin{equation}\label{eq:tildes}   \widehat{\beta}^{\sharp}:=F(\beta)^\sharp = \beta^\sharp(\mathrm{id} + Z^\sharp \beta^\sharp)^{-1}  \textrm{ as in eq. \eqref{eq:alphabeta},} \qquad \widehat{\alpha}:=F'(\beta,\alpha):= (\mathrm{id} +  \beta^\sharp Z^\sharp)^{-1}\alpha.
\end{equation}
\end{lemma}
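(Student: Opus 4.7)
The plan is to reduce the claimed equality of $2$-forms on $M\times[0,1]$ to a fiberwise linear-algebra computation. Since $F$ and $\widetilde F$ are defined pointwise, I fix $(p,t)\in M\times[0,1]$ and use the canonical splittings $T_{(p,t)}(M\times[0,1])\cong T_pM\oplus\mathbb R\partial_t$ and $T^*_{(p,t)}(M\times[0,1])\cong T^*_pM\oplus\mathbb R\,dt$. Writing $\gamma:=\beta_t+dt\wedge\alpha_t$, a direct application of the interior product gives, for $v\in T_pM$ and $c\in\mathbb R$,
\[
\gamma^\sharp(v+c\partial_t)=\beta_t^\sharp v+c\alpha_t-\alpha_t(v)\,dt.
\]
Since $\widetilde Z^\sharp$ annihilates $dt$ and agrees with $Z^\sharp$ on $T^*_pM$, the composition $\widetilde Z^\sharp\gamma^\sharp$ takes values in $T_pM$ and has trivial $\partial_t$-component, so $\id+\widetilde Z^\sharp\gamma^\sharp$ is block upper-triangular. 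The hypothesis $\beta_t\in\mathcal I_Z$ makes the diagonal block $\id+Z^\sharp\beta_t^\sharp$ invertible, and one inverts directly to obtain
\[
(\id+\widetilde Z^\sharp\gamma^\sharp)^{-1}(v+c\partial_t)=(\id+Z^\sharp\beta_t^\sharp)^{-1}(v-cZ^\sharp\alpha_t)+c\partial_t.
\]

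Next, I apply $\gamma^\sharp$ to this expression and split the output into its $T^*_pM$ and $dt$ components. The $T^*_pM$-part reads $\beta_t^\sharp(\id+Z^\sharp\beta_t^\sharp)^{-1}(v-cZ^\sharp\alpha_t)+c\alpha_t$; invoking the algebraic identity $\beta^\sharp(\id+Z^\sharp\beta^\sharp)^{-1}=(\id+\beta^\sharp Z^\sharp)^{-1}\beta^\sharp$ and telescoping in the $c$-term yields $\widehat\beta^\sharp v+c\widehat\alpha$, which agrees with the $T^*_pM$-part of $(F(\beta_t)+dt\wedge\widehat\alpha)^\sharp(v+c\partial_t)$. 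It remains to show that the $dt$-coefficient, namely $-\alpha_t((\id+Z^\sharp\beta_t^\sharp)^{-1}(v-cZ^\sharp\alpha_t))$, equals $-\widehat\alpha(v)$ independently of $c$. The $v$-piece is immediate from the duality $(\id+Z^\sharp\beta_t^\sharp)^{*}=\id+\beta_t^\sharp Z^\sharp$, itself a consequence of $(\beta_t^\sharp)^{*}=-\beta_t^\sharp$ and $(Z^\sharp)^{*}=-Z^\sharp$, since it gives $\alpha_t\circ(\id+Z^\sharp\beta_t^\sharp)^{-1}=\widehat\alpha$.

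The step I expect to be the single subtle point is the vanishing of the $c$-piece, i.e., the identity $\widehat\alpha(Z^\sharp\alpha_t)=0$. To establish it, antisymmetry of $Z^\sharp$ gives $\widehat\alpha(Z^\sharp\alpha_t)=-\alpha_t(Z^\sharp\widehat\alpha)$; the intertwining relation $Z^\sharp(\id+\beta_t^\sharp Z^\sharp)^{-1}=(\id+Z^\sharp\beta_t^\sharp)^{-1}Z^\sharp$ (immediate from $(\id+Z^\sharp\beta_t^\sharp)Z^\sharp=Z^\sharp(\id+\beta_t^\sharp Z^\sharp)$) rewrites $Z^\sharp\widehat\alpha$, and a second use of the duality turns the result back into $-\widehat\alpha(Z^\sharp\alpha_t)$. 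The resulting relation $\widehat\alpha(Z^\sharp\alpha_t)=-\widehat\alpha(Z^\sharp\alpha_t)$ forces this scalar to vanish, finishing the blockwise comparison and the lemma.
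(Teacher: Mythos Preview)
Your proof is correct and follows essentially the same block-triangular inversion as the paper's argument, only carried out on the tangent side (inverting $\id+\widetilde Z^\sharp\gamma^\sharp$ on $T_pM\oplus\mathbb R\partial_t$) rather than on the cotangent side (the paper inverts $\id+\gamma^\sharp\widetilde Z^\sharp$ on $T^*_pM\oplus\mathbb R\,dt$, where it is lower triangular). The only notable difference is that you spell out the vanishing $\widehat\alpha(Z^\sharp\alpha_t)=0$ explicitly, whereas the paper absorbs this into ``straightforward to verify''; in fact that vanishing is automatic once you know $\widetilde F(\gamma)$ is a $2$-form, since it is the $dt$-coefficient of $\widetilde F(\gamma)^\sharp(\partial_t)$, i.e.\ $\widetilde F(\gamma)(\partial_t,\partial_t)$.
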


\begin{proof}
{We have}
$$\widetilde{F}(\beta_{{t}} + dt \wedge  \alpha_{{t}})^\sharp = 
[\mathrm{id} + (\beta_{{t}} + dt \wedge \alpha_t)^\sharp\widetilde{Z}^\sharp]^{-1}(\beta_{{t}} + dt \wedge \alpha_{{t}})^{\sharp}.$$
W.r.t. the splitting $T^*_{(p,t)}(M\times [0,1])=T^*_pM \oplus \RR dt$,
the isomorphism in the square bracket is lower triangular, so it can be easily inverted to obtain
$\left(\begin{array}{cc} (\mathrm{id}+\beta^{\sharp}Z^{\sharp})^{-1}  & 
0
 \\ dt\cdot\alpha^{\sharp}Z^{\sharp}   (\mathrm{id}+\beta^{\sharp}Z^{\sharp})^{-1} & \mathrm{id}_{\RR dt}     \end{array}\right).$
 Using this, it is straightforward  to verify that {the above} $2$-form on $M\times [0,1]$ simplifies to
$F(\beta_{t}) + dt \wedge  F'(\beta_t,\alpha_t)$ as stated in the proposition.
\end{proof}

In conclusion, we obtain:

\begin{proposition}\label{prop:bijequiv}
Fix a bivector field $Z$ on $M$. There is a bijection between:

 \begin{enumerate}
\item  One-parameter families $(\beta_t)_{t\in [0,1]}\subset \mathcal{I}_Z$  of Maurer-Cartan elements of $(\Omega(M)[2],\lambda_1,\lambda_2,\lambda_3)$, \\
 one-parameter families $(\alpha_t)_{t\in [0,1]}$ in $\Omega^1(M)$\\
 such that
 the   differential equation
\eqref{eq:equivMC} is satisfied,

\item  One-parameter families $(\widehat{\beta}_t)_{t\in [0,1]}\subset \mathcal{I}_{-Z}$ of closed forms in $\Omega^2(M)$   and\\   
one-parameter families 
$(\widehat{\alpha}_t)_{t\in [0,1]}$ in $\Omega^1(M)$\\
 satisfying $\frac{\partial \widehat{\beta}_t}{\partial t} = d \widehat{\alpha}_t$.
\end{enumerate}
The bijection maps  $(\beta_t,\alpha_t)_{t\in [0,1]}$ to $(\widehat{\beta}_t,\widehat{\alpha}_t)_{t\in [0,1]}$ 
 as in eq. \eqref{eq:tildes}.
\end{proposition}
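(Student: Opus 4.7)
The plan is to prove Proposition \ref{prop:bijequiv} by simply assembling the machinery that has already been developed earlier in \S\ref{subsubsection:gaugetwoforms}. The main insight is that both sides of the bijection naturally live on the product manifold $M\times [0,1]$, where condition (1) becomes a Maurer--Cartan condition and condition (2) becomes a closedness condition, and these two conditions are related by the bijection $\widetilde{F}$ coming from Thm. \ref{thm:Z}.

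More concretely, I would start by using Lemma \ref{rem:MCprod} to translate condition (1): a pair $(\beta_t,\alpha_t)_{t\in [0,1]}$ satisfies the differential equation \eqref{eq:equivMC} together with the Maurer--Cartan condition on each $\beta_t$ if and only if the $2$-form $\beta_t + dt\wedge \alpha_t\in \Omega^2(M\times[0,1])$ is a Maurer--Cartan element of the $L_\infty[1]$-algebra $(\Omega(M\times[0,1])[2],\lambda_1+d_{[0,1]},\lambda_2,\lambda_3)$. Moreover, $\beta_t\in \mathcal{I}_Z$ for all $t$ precisely when $\beta_t+dt\wedge \alpha_t\in \mathcal{I}_{\widetilde{Z}}$, by the discussion preceding Lemma \ref{proposition: algebraic map = geometric map II}.

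Next, I would apply Thm. \ref{thm:Z} to the bivector field $\widetilde{Z}$ on $M\times[0,1]$: it yields exactly the above $L_\infty[1]$-algebra, and it tells us that a form in $\mathcal{I}_{\widetilde{Z}}$ is Maurer--Cartan if and only if its image under $\widetilde{F}$ is closed with respect to the de Rham differential on $M\times[0,1]$. By Lemma \ref{proposition: algebraic map = geometric map II} this image equals $F(\beta_t) + dt\wedge F'(\beta_t,\alpha_t)=\widehat{\beta}_t + dt\wedge \widehat{\alpha}_t$, where $(\widehat{\beta}_t,\widehat{\alpha}_t)$ is defined as in \eqref{eq:tildes}. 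Decomposing the de Rham differential on $M\times[0,1]$ as $d+dt\wedge \partial_t$ and splitting degrees, the closedness of $\widehat{\beta}_t + dt\wedge \widehat{\alpha}_t$ is exactly the conjunction of $d\widehat{\beta}_t=0$ and $\partial_t\widehat{\beta}_t = d\widehat{\alpha}_t$, as already observed in \eqref{eq:psiMxIrhs}. Finally, since $\widetilde{F}$ maps $\mathcal{I}_{\widetilde{Z}}$ bijectively onto $\mathcal{I}_{-\widetilde{Z}}$, we automatically have $\widehat{\beta}_t\in \mathcal{I}_{-Z}$ for all $t$, giving condition (2).

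The assertion that this correspondence is a bijection follows because each step above is a bijection: Lemma \ref{rem:MCprod} is a reformulation, $\widetilde{F}\colon \mathcal{I}_{\widetilde{Z}}\to \mathcal{I}_{-\widetilde{Z}}$ is a diffeomorphism by construction, and the splitting into components $(\widehat{\beta}_t,\widehat{\alpha}_t)$ given by Lemma \ref{proposition: algebraic map = geometric map II} is clearly invertible (one may invert \eqref{eq:tildes} to recover $(\beta_t,\alpha_t)$ from $(\widehat{\beta}_t,\widehat{\alpha}_t)$). There is no serious obstacle here: the whole proof is essentially a bookkeeping exercise combining Lemmas \ref{rem:MCprod} and \ref{proposition: algebraic map = geometric map II} with Thm. \ref{thm:Z} applied to $\widetilde{Z}$. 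The only point requiring mild care is checking that smoothness in $t$ is preserved under $F$ and $F'$, which is immediate since both formulae are smooth in $\beta_t$ and $\alpha_t$.
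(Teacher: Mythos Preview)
Your proposal is correct and follows essentially the same route as the paper's own proof: apply Lemma~\ref{rem:MCprod} to pass to $M\times[0,1]$, use Thm.~\ref{thm:Z} for the bivector $\widetilde{Z}$ together with the bijection $\widetilde{F}\colon \mathcal{I}_{\widetilde{Z}}\to\mathcal{I}_{-\widetilde{Z}}$, and read off the explicit formulae via Lemma~\ref{proposition: algebraic map = geometric map II}. The paper's proof is more terse but the logical content is identical.
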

\begin{proof}
Given an element of the set (1), apply Lemma \ref{rem:MCprod} (and its proof) and the map $\widetilde{F}$
to obtain an element of the set (2).
This map is a bijection since $\widetilde{F}\colon \mathcal{I}_{\widetilde{Z}} \to \mathcal{I}_{-\widetilde{Z}}$ is a bijection. {The formulae for $\widehat{\beta}_t$ and $\widehat{\alpha}_t$ were obtained in 
 Lemma \ref{proposition: algebraic map = geometric map II}.}
\end{proof}
%%%

\subsubsection{\underline{Gauge equivalences of pre-symplectic forms}}
\label{subsubsection:gaugepre}
\hfill \break   \vspace{-3mm}%skips a line

Let $(M,\eta)$ be a pre-symplectic manifold. As usual, we fix a complement $G$
of the kernel $K=\ker(\eta)$ and denote by $Z$ the  bivector field 
determined by $ Z^\sharp=-(\eta\vert_G^\sharp)^{-1}$.

The subcomplex of $(\Omega(M)[2],\lambda_1,\lambda_2,\lambda_3)$ consisting of horizontal differential forms $\Omega_\hor(M)$ forms an $L_\infty[1]$-subalgebra, which we refer to as the Koszul $L_\infty[1]$-algebra of $(M,\eta)$, see Thm. \ref{theorem: construction - Koszul L-infty}.
Further we saw that the Maurer-Cartan elements of the Koszul $L_\infty[1]$-algebra 
correspond to deformations of the pre-symplectic structure $\eta$ (the rank is required to stay fixed), see Thm. \ref{theorem: main result}. 
Now we specialize  to horizontal forms the results about gauge equivalence obtained in \S \ref{subsubsection:gaugetwoforms}.\\
 
We first need a linear algebra lemma. Recall that $\widehat{\alpha}, \widehat{\beta}$ were defined in  eq. \eqref{eq:tildes}.
\begin{lemma}\label{lem:vanishes}
Let $\beta\in \Omega^2_{\hor}(M)$ lie in $\mathcal{I}_Z$. For any $\alpha \in \Omega^1(M)$ we have: $\alpha \in \Omega^1_{\hor}(M)$ if{f}
$\widehat{\alpha}$ vanishes on the kernel of
${\eta}+\widehat{\beta}$.
\end{lemma}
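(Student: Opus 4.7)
The plan is to translate both conditions into linear-algebraic statements relative to the splittings $TM=G\oplus K$ and $T^*M=K^\circ\oplus G^\circ$. With respect to the latter, any $1$-form decomposes uniquely as $\alpha=\alpha_G+\alpha_K$ with $\alpha_G\in\Gamma(K^\circ)$ and $\alpha_K\in\Gamma(G^\circ)$, and horizontality of $\alpha$ means precisely $\alpha_K=0$. The heart of the argument will be to show that $\widehat{\alpha}(w)=\alpha_K(w)$ whenever $w\in \ker(\eta+\widehat{\beta})$; a transversality argument then finishes the proof.

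First, I would replace the defining formula $\widehat{\alpha}=(\id+\beta^\sharp Z^\sharp)^{-1}\alpha$ by the equivalent one $\widehat{\alpha}=\alpha-\widehat{\beta}^\sharp Z^\sharp\alpha$. This follows from the identity $(\id+\beta^\sharp Z^\sharp)(\id-\widehat{\beta}^\sharp Z^\sharp)=\id$, which is a direct manipulation using the rewrite $\beta^\sharp-\widehat{\beta}^\sharp=\beta^\sharp Z^\sharp \widehat{\beta}^\sharp$ of the second formula for $\widehat{\beta}^\sharp$ given in eq.~\eqref{eq:alphabeta}.

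Next, I would compute the composition $\eta^\sharp Z^\sharp\colon T^*M\to T^*M$ once and for all. Because $Z\in \Gamma(\wedge^2 G)$, the bundle map $Z^\sharp$ vanishes on $G^\circ$ and, under the identification $K^\circ\cong G^*$, equals $-(\eta|_G^\sharp)^{-1}$; composing with $\eta^\sharp$, which restricted to $G$ is exactly $\eta|_G^\sharp\colon G\to K^\circ$, yields $\eta^\sharp Z^\sharp=-\pi_{K^\circ}$, the projection onto $K^\circ$ along $G^\circ$. Now, for $w\in\ker(\eta+\widehat{\beta})$ one has $\widehat{\beta}^\sharp w=-\eta^\sharp w$, and the skew-symmetry of $\widehat{\beta}$ gives
\[
(\widehat{\beta}^\sharp Z^\sharp\alpha)(w) \;=\; -\widehat{\beta}(w,Z^\sharp\alpha) \;=\; (\eta^\sharp w)(Z^\sharp\alpha) \;=\; -(\eta^\sharp Z^\sharp\alpha)(w) \;=\; \alpha_G(w).
\]
Plugging into the alternative formula for $\widehat{\alpha}$ yields $\widehat{\alpha}(w)=\alpha(w)-\alpha_G(w)=\alpha_K(w)$, as desired.

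Finally, the equivalence claimed in the lemma follows: since $\beta\in\Omega^2_\hor(M)\cap\Gamma(\mathcal{I}_Z)$, Thm.~\ref{theorem: almost Dirac structures}(iii) implies that $\eta+\widehat{\beta}$ is pre-symplectic with kernel transverse to $G$, i.e.\ $TM=G\oplus\ker(\eta+\widehat{\beta})$. The component $\alpha_K$ tautologically vanishes on $G$, so $\alpha_K|_{\ker(\eta+\widehat{\beta})}=0$ is equivalent to $\alpha_K=0$, i.e.\ to $\alpha\in\Gamma(K^\circ)=\Omega^1_\hor(M)$. The main obstacle is the careful bookkeeping needed to establish the alternative formula $\widehat{\alpha}=\alpha-\widehat{\beta}^\sharp Z^\sharp\alpha$ together with the identity $\eta^\sharp Z^\sharp=-\pi_{K^\circ}$; once these are in hand, the argument reduces to a tidy application of the transversality statement in Thm.~\ref{theorem: almost Dirac structures}(iii).
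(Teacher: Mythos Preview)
Your argument is correct, but it takes a longer path than the paper's. The paper exploits Thm.~\ref{theorem: almost Dirac structures}(ii) directly: the kernel of $\eta+\widehat{\beta}$ is the image of $(\id+Z^\sharp\beta^\sharp)\colon K\to TM$, and since $(\id+Z^\sharp\beta^\sharp)$ and $(\id+\beta^\sharp Z^\sharp)$ are transpose to one another (by antisymmetry of $Z$ and $\beta$), one gets in a single line
\[
\langle\widehat{\alpha},(\id+Z^\sharp\beta^\sharp)v\rangle=\langle(\id+\beta^\sharp Z^\sharp)^{-1}\alpha,(\id+Z^\sharp\beta^\sharp)v\rangle=\langle\alpha,v\rangle
\]
for all $v\in K$, which is precisely the desired equivalence. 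Your route avoids this explicit parametrization of the kernel: you instead rewrite $\widehat{\alpha}$ via the identity $(\id+\beta^\sharp Z^\sharp)^{-1}=\id-\widehat{\beta}^\sharp Z^\sharp$, compute $\eta^\sharp Z^\sharp=-\pi_{K^\circ}$, and then use only the defining relation $\widehat{\beta}^\sharp w=-\eta^\sharp w$ for $w$ in the kernel, closing with the transversality from Thm.~\ref{theorem: almost Dirac structures}(iii). Both arguments establish the same identity $\widehat{\alpha}(w)=\alpha_K(w)$ (in the paper's notation, $\langle\alpha,v\rangle$ with $v$ the $K$-component of $w$), so the content is equivalent; the paper's version is just the more economical packaging. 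One small terminological slip: you call $\eta+\widehat{\beta}$ ``pre-symplectic'', but closedness is not assumed here; what you actually need and use is only that it has rank $k$ with kernel transverse to $G$, which is indeed what Thm.~\ref{theorem: almost Dirac structures} provides.
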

\begin{proof}
The kernel of ${\eta}+\widehat{\beta}$ equals the image of the  
map
$(\mathrm{id}+ Z^\sharp \beta^{\sharp}|_K)\colon K \to TM$, by Theorem \ref{theorem: almost Dirac structures}.
For all $v\in K$ we have 
$$\langle\widehat{\alpha},(\mathrm{id} + Z^\sharp \beta^\sharp)(v)\rangle
=\langle(\mathrm{id}+\beta^\sharp Z^\sharp)^{-1}(\alpha),(\mathrm{id}+Z^\sharp\beta^\sharp)(v)\rangle
=\langle \alpha,v\rangle,$$
proving the lemma. 
\end{proof} 

We can now refine Prop. \ref{prop:bijequiv} to:

\begin{proposition}\label{prop:bijequivhor}
There is a bijection between:
 
 \begin{enumerate}
\item  One-parameter families $(\beta_t)_{t\in [0,1]}\subset \mathcal{I}_Z$  of Maurer-Cartan elements of $(\Omega_\hor(M)[2],\lambda_1,\lambda_2,\lambda_3)$, \\
 one-parameter families $(\alpha_t)_{t\in [0,1]}$ in $\Omega_\hor^1(M)$\\
 such that
 the   differential equation
\eqref{eq:equivMC} is satisfied.  

\item  One-parameter families $(\widehat{\beta}_t)_{t\in [0,1]}$ in $\Omega^2(M)$   and\\   
one-parameter families 
$(\widehat{\alpha}_t)_{t\in [0,1]}$ in $\Omega^1(M)$\\
such that 
\begin{itemize}
\item $ \eta_t:=\eta+\widehat{\beta}_t$ is pre-symplectic on $M$ of rank equal to $rank(\eta)$ and with kernel transverse to $G$, 
\item $\widehat{\alpha}_t$ vanishes on $\ker(\eta_t)$, 
\item
 $\frac{\partial \eta_t}{\partial t} = d\widehat{\alpha_t}$.
\end{itemize}
  \end{enumerate}
 \end{proposition}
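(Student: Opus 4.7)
The plan is to deduce this refinement by specializing the bijection of Prop.\ \ref{prop:bijequiv} to the horizontal subcomplex, then checking that each of the two horizontality constraints on the Maurer-Cartan side corresponds precisely to one of the extra geometric conditions on the pre-symplectic side.

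First I would note that by Thm.\ \ref{theorem: construction - Koszul L-infty}, $\Omega_\hor(M)[2]$ is an $L_\infty[1]$-subalgebra of $(\Omega(M)[2],\lambda_1,\lambda_2,\lambda_3)$, so every datum satisfying the hypotheses in (1) of the present proposition is in particular a datum satisfying the hypotheses in (1) of Prop.\ \ref{prop:bijequiv}. Applying the latter therefore produces corresponding families $\widehat{\beta}_t = F(\beta_t)$ and $\widehat{\alpha}_t = F'(\beta_t,\alpha_t)$ as in \eqref{eq:tildes}, which are the candidates for the right-hand side.

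Next I would translate the horizontality of $\beta_t$ into the geometric condition on $\eta_t := \eta + \widehat{\beta}_t$. Fibrewise, $\Omega^2_\hor(M)\cap \mathcal{I}_Z$ coincides with $\mathcal{I}_Z\cap ((K^*\otimes G^*)\oplus \wedge^2 G^*)$, so Thm.\ \ref{theorem: main result} (which characterises Maurer-Cartan elements of the Koszul $L_\infty[1]$-algebra) combined with Thm.\ \ref{theorem: almost Dirac structures}(iii) (which guarantees transversality of the kernel to $G$) yields: $\beta_t$ is a Maurer-Cartan element in $\Omega_\hor(M)[2]$ if and only if $\eta_t$ is pre-symplectic of the same rank as $\eta$ with kernel transverse to $G$. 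The closedness of $\widehat{\beta}_t$ demanded in Prop.\ \ref{prop:bijequiv}(2) is then automatic, because $\eta$ is closed and pre-symplectic forms are closed.

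Finally I would invoke Lemma \ref{lem:vanishes}, applied at each time $t$, which gives that $\alpha_t\in \Omega^1_\hor(M)$ if and only if $\widehat{\alpha}_t$ vanishes on $\ker(\eta_t)$; and the identity $\frac{\partial \eta_t}{\partial t} = d\widehat{\alpha}_t$ is a direct restatement of $\frac{\partial \widehat{\beta}_t}{\partial t} = d\widehat{\alpha}_t$ from Prop.\ \ref{prop:bijequiv}(2), since $\eta$ is independent of $t$. I do not expect a genuine obstacle here: the proof amounts to assembling the pointwise correspondences supplied by Thm.\ \ref{theorem: almost Dirac structures}, Thm.\ \ref{theorem: main result} and Lemma \ref{lem:vanishes}, all of which have already been established, into the time-dependent statement.
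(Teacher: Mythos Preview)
Your proposal is correct and follows essentially the same approach as the paper: specialize Prop.\ \ref{prop:bijequiv} to horizontal forms, invoking Thm.\ \ref{theorem: main result} (together with the bijection \eqref{intro: parametrization}, which rests on Thm.\ \ref{theorem: almost Dirac structures}) for the $\beta_t$-condition and Lemma \ref{lem:vanishes} for the $\alpha_t$-condition. Your write-up is in fact more detailed than the paper's one-line proof, but the ingredients and logic are identical.
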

\begin{proof}
Use Prop. \ref{prop:bijequiv}, together with Thm. \ref{theorem: main result} with the text following it, 
and Lemma \ref{lem:vanishes}.
\end{proof}

Rephrasing the above proposition gives the main result of this subsection.

\begin{theorem}\label{thm:isotopyMC}
Let $(M,\eta)$ be a  {compact} pre-symplectic manifold, and fix a complement $G$
of $\ker(\eta)$. 
Fix 
Maurer-Cartan elements $\beta_i$ 
of $(\Omega_\hor(M)[2],\lambda_1,\lambda_2,\lambda_3)$ lying in $\mathcal{I}_Z$, and 
denote by $\eta_i:=\eta+F({\beta}_i)$ the corresponding
pre-symplectic forms, for $i=0,1$.
The following is equivalent:
\begin{itemize}
\item[(1)] The Maurer-Cartan elements  $\beta_0$ and $\beta_1$ are   gauge equivalent
{through Maurer-Cartan elements $(\beta_t)_{t\in [0,1]}$ lying in $\mathcal{I}_Z$}.
\item[(2)] There is a diffeomorphism of $M$  pulling back  $\eta_1$ to $\eta_0$
and isotopic to the identity by
 an isotopy $(\varphi_t)_{t\in [0,1]}$  such that $(\varphi_t)_*(\ker(\eta_0))$ is transverse to $G$ for all $t$.
\end{itemize}

\end{theorem}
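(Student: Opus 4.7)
The approach is to mirror the proof of Thm. \ref{thm:isotopyMCfol}, with Prop. \ref{prop:bijequivhor} playing the combinatorial role that Lemma \ref{lem:productfol} played in the foliation case, and with Prop. \ref{prop:isotopy} supplying the Moser-type input needed to promote a path of pre-symplectic deformations into an ambient isotopy.

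For $(1) \Rightarrow (2)$, I would feed the gauge-equivalence data into Prop. \ref{prop:bijequivhor}: the path $(\beta_t)_{t \in [0,1]} \subset \mathcal{I}_Z$ together with a companion family $(\alpha_t)_{t \in [0,1]}$ produce a smooth path of pre-symplectic forms $\eta_t := \eta + F(\beta_t)$, all of the same rank as $\eta$ and with kernels transverse to $G$, together with $1$-forms $\widehat{\alpha}_t \in \Gamma((\ker \eta_t)^{\circ})$ satisfying $\partial_t \eta_t = d\widehat{\alpha}_t$. This is precisely the hypothesis of Prop. \ref{prop:isotopy}. Running Moser's trick as in its proof and using the compactness of $M$, one obtains an isotopy $(g_t)_{t \in [0,1]}$ with $g_t^* \eta_t = \eta_0$; setting $\varphi_t := g_t^{-1}$ then gives $\varphi_1^* \eta_1 = \eta_0$ and $(\varphi_t)_*(\ker \eta_0) = \ker \eta_t$, the latter being transverse to $G$ by the very origin of $\eta_t$ as $\exp_\eta(\beta_t)$.

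For $(2) \Rightarrow (1)$, I would define the path $\eta_t := (\varphi_t^{-1})^* \eta_0$. Since $\ker \eta_t = (\varphi_t)_*(\ker \eta_0)$ is transverse to $G$ for all $t$, the parametrization \eqref{intro: parametrization} furnishes a unique smooth path $(\beta_t)_{t \in [0,1]} \subset \mathcal{I}_Z$ of Maurer-Cartan elements with $\exp_\eta(\beta_t) = \eta_t$. Letting $X_t$ denote the time-dependent vector field generating $(\varphi_t)_{t \in [0,1]}$, differentiation of the identity $\varphi_t^* \eta_t = \eta_0$ together with $d\eta_t = 0$ yields $\partial_t \eta_t = d(-\iota_{X_t}\eta_t)$. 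Hence $\widehat{\alpha}_t := -\iota_{X_t}\eta_t$ automatically vanishes on $\ker \eta_t$ and meets the hypotheses of item $(2)$ in Prop. \ref{prop:bijequivhor}; applying the bijection of that proposition in reverse produces the companion family $(\alpha_t)_{t \in [0,1]}$ witnessing the gauge equivalence.

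The main subtlety I expect is ensuring that the $\mathcal{I}_Z$-condition and the $G$-transversality of kernels are propagated consistently in both directions: in $(1) \Rightarrow (2)$ the Moser isotopy must preserve the transversality of kernels supplied by the path lying in $\mathcal{I}_Z$, while in $(2) \Rightarrow (1)$ the geometric transversality assumed in $(2)$ is exactly what is needed to lift $\eta_t$ to a Maurer-Cartan path inside $\mathcal{I}_Z$ via \eqref{intro: parametrization}. Once these compatibilities are in place, both implications reduce to an unwinding of the dictionary provided by Prop. \ref{prop:bijequivhor}, exactly as the foliation counterpart reduces to Lemma \ref{lem:productfol}.
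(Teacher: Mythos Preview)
Your proposal is correct and follows exactly the paper's approach, which is simply to combine Prop.~\ref{prop:bijequivhor} with Prop.~\ref{prop:isotopy}. One small bookkeeping slip in $(1)\Rightarrow(2)$: you should take $\varphi_t := g_t$ rather than $g_t^{-1}$, since $g_t^*\eta_t=\eta_0$ already makes $g_1$ the diffeomorphism pulling back $\eta_1$ to $\eta_0$, and then $(\varphi_t)_*(\ker\eta_0)=(g_t)_*(\ker\eta_0)=\ker\eta_t$ is transverse to $G$ as needed.
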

\begin{proof}
Apply Prop. \ref{prop:bijequivhor} and Prop. \ref{prop:isotopy}.
\end{proof}
Thm. \ref{thm:isotopyMC} shows that 
 two Maurer-Cartan elements 
lying in $\mathcal{I}_Z$ are gauge equivalent  through Maurer-Cartan elements   lying in $\mathcal{I}_Z$ if{f} they lie in the same connected component of 
$$\{\text{pre-symplectic forms with kernel transverse to $G$}\}\cap  \text{(a $\Diff_0(M)$-orbit)}.$$

\begin{remark}\label{rem:isopreG}[On isotopies tangent to $G$, and relation to Thm. \ref{thm:isotopyMCfol}]

Suppose we are given  $\beta_0,\beta_1 \in \Omega^2_{\hor}(M)$ as in the first item of  Thm. \ref{thm:isotopyMC}, that is: they are gauge-equivalent via one-parameter families $(\alpha_t,\beta_t)_{t\in [0,1]}$ with 
$\beta_t\in  \mathcal{I}_Z$.

(i) An isotopy as in the second item of Thm. \ref{thm:isotopyMC} is obtained {by} integrating $Z^{\sharp}\alpha_t$. Indeed, the isotopy constructed in Prop. \ref{prop:isotopy} is obtained {by} integrating a time-dependent vector field $X_t$ satisfying $\iota_{X_t}{\eta_t}=-\widehat{\alpha}_t$.
We claim that $X_t:=Z^{\sharp}\alpha_t$ (a vector field tangent to the distribution $G$) satisfies this equation. 
The claim follows from
$$\iota_{Z^{\sharp}\alpha_t}{\eta_t}=-\alpha_t+  \beta_t^\sharp(id + Z^\sharp \beta_t^\sharp)^{-1}Z^{\sharp}\alpha_t= -(id +  \beta_t^\sharp Z^\sharp)^{-1} \alpha_t=
-\widehat{\alpha}_t,$$
where we used $\beta_t^\sharp(id + Z^\sharp \beta_t^\sharp)^{-1}Z^{\sharp}=id-(id + \beta_t^\sharp Z^\sharp )^{-1}$ in the second equality.

This implies the following geometric fact, analogous to Lemma \ref{rem:folorbit}: given a diffeomorphism $\varphi$ as in Thm. \ref{thm:isotopyMC} (2), one can construct a diffeomorphism $\varphi'$ with the same properties and so that in addition  the isotopy $(\varphi'_t)_{t\in [0,1]}$  satisfies $(\frac{d}{dt}\varphi'_t)\in \Gamma(G)$.

(ii) 
In   Thm. \ref{theorem: characteristic foliation} we displayed  a strict $L_{\infty}[1]$-morphism from $\Omega_{\hor}(M)[2]$ to  $\Gamma(\wedge K^*\otimes G)[1]$, the 
$L_{\infty}[1]$-algebra which governs deformations of involutive distributions   transverse to $G$.
Applying this morphism to $\beta_i$ ($i=0,1$) we obtain Maurer-Cartan elements $\Phi_i:=Z^{\sharp}\beta_i|_K$ of $\Gamma(\wedge K^*\otimes G)[1]$ which are gauge-equivalent, via $(Z^{\sharp}\alpha_t, Z^{\sharp}\beta_t)_{t\in [0,1]}$. Notice that
 $\graph(\Phi_i)=\ker(\eta_i)$ by Thm. \ref{theorem: almost Dirac structures}.

On the other hand, in view of (i) above, the isotopy $(\varphi_t)_{t\in [0,1]}$ obtained integrating $Z^{\sharp}\alpha_t$ satisfies the  properties   in   Thm. \ref{thm:isotopyMC} (2). In particular, $(\varphi_1)_*(\ker(\eta_0))=\ker(\eta_1)$  and
 $(\varphi_t)_*(\ker(\eta_0))$ is transverse to $G$ for all $t\in [0,1]$. 
 
 The {two results explained in this item} are consistent with   Thm. \ref{thm:isotopyMCfol} on equivalences of foliations.
\end{remark}

\begin{remark}\label{rem:altproof}[An alternative proof of Prop. \ref{prop:bijequivhor}]
Our Prop. \ref{prop:bijequivhor} was proven specializing Prop. \ref{prop:bijequiv} to horizontal forms on $M$,   
by means of Lemma \ref{lem:vanishes}. 
 
We provide an alternative proof of Prop. \ref{prop:bijequivhor}, working on the product  
$\widetilde{M}:=M\times [0,1]$ and specializing Lemma \ref{proposition: algebraic map = geometric map II} to horizontal forms on $\widetilde{M}$. We view $\eta$ as a 2-form on $\widetilde{M}$, by taking its pullback. It is pre-symplectic, with kernel
$K\oplus \RR \partial_t$. Notice that a
 complement to this kernel in $T\widetilde{M}$ is
 (the obvious extension of) $G$.  
 
 Take one-parameter families $(\beta_t)_{t\in [0,1]}$ and $(\alpha_t)_{t\in [0,1]}$
 as in   (1) of Prop. \ref{prop:bijequivhor}.
 The 2-form $\beta_t + dt \wedge \alpha_t$ on $\widetilde{M}$ is clearly horizontal, and is a Maurer-Cartan element by Lemma  \ref{rem:MCprod}.
Hence 
by applying ${\widetilde{F}}$ as in eq. \eqref{eq:psiMxI} 
 we see (using Thm. \ref{theorem: almost Dirac structures} and Thm. \ref{theorem: main result}) that
 $$(\eta+\widehat{\beta}_t) + dt \wedge \widehat{\alpha}_t$$ is a pre-symplectic form on $\widetilde{M}$ whose kernel has dimension $\dim(K)+1$ and is transverse to $G$. 
 
 Notice that, for all $t$,  the condition $\widehat{\alpha}_t\in (\ker (\eta+\widehat{\beta}_t))^{\circ}$ holds at every point of $M$. (This follows from eq. \eqref{eq:rankkerTheta}, using that 
 $\dim(\ker(\eta+\widehat{\beta}_t))=\dim(K)$ by Thm. \ref{theorem: almost Dirac structures} (i).) Applying Cor. \ref{cor:presmxI} and using eq. \eqref{eq:kerTheta} we obtain item (2) in Prop. \ref{prop:bijequivhor}.
Clearly this reasoning can be inverted.

 \end{remark}

%%%   
\subsection{{Moduli spaces}}\label{subsection:mod}

We collect some geometric corollaries about the space of foliations (respectively pre-symplectic structures) modulo isotopy.

\subsubsection{Moduli space of foliations}

Denote by $\Foliations^k(M)$ the space of codimension $k$ involutive distributions on a manifold $M$.
Recall that $\Foliations^k(M)$ carries an action of the   group of isotopies  $\Diff_0(M)$   (Def. \ref{definition: isotopy of fol}).

Fix  an involutive distribution $K$.
 Thanks to Prop. \ref{prop:Ji} and Thm. \ref{thm:isotopyMCfol}, we know that
the formal tangent space to the moduli space $\Foliations^k(M)/\Diff_0(M)$
at the equivalence class of $K$ is
$$T_{[K]}\left(\Foliations^k(M))/\Diff_0(M)\right)\cong H^1(K,G).$$
Here  $G$ is an auxiliary distribution complementary to $K$, and $H(K,G)$ denotes the cohomology of the complex
$(\Gamma(\wedge K^*\otimes G), l_1)$ introduced just after Prop. \ref{prop:Ji}. We now show that the dimension of this tangent space can vary abruptly.

\begin{lemma}\label{lem:lambda}
Let $M=\TT^2$ be the 2-torus with ``coordinates'' $\theta_1$ and $\theta_2$. 
For any   $\lambda \in \RR$
consider the involutive distribution $K_{\lambda}:=Span\{\pd{\theta_1}+\lambda \pd{\theta_2} \}$, and its complement $G=Span\{\pd{\theta_2}\}$.
Then 
$$\dim(H^1(K_{\lambda},G))=
\begin{cases}
\infty \;\;\text{if $\lambda$ is a rational or Liouville number}\\
1 \;\;\;\;\text{if $\lambda$ is a diophantine number.}
\end{cases}
$$
  \end{lemma}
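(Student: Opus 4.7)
My first step is to make the complex $(\Gamma(\wedge K_\lambda^*\otimes G), l_1)$ fully explicit. Since $K_\lambda$ has rank one, it lives only in degrees $0$ and $1$. Using the global frames $e_\lambda:=\pd{\theta_1}+\lambda\pd{\theta_2}$ of $K_\lambda$ (and its dual) together with $\pd{\theta_2}$ of $G$, both $\Gamma(G)$ and $\Gamma(K_\lambda^*\otimes G)$ are identified with $\smooth^\infty(\TT^2)$. The formula for $l_1$ displayed after Prop.\ \ref{prop:Ji}, combined with $[e_\lambda, \pd{\theta_2}]=0$, shows that under these identifications $l_1$ becomes the constant-coefficient operator $D_\lambda f:=\pd{\theta_1}f+\lambda\pd{\theta_2}f$. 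Consequently
$$H^1(K_\lambda,G)\;\cong\;\smooth^\infty(\TT^2)/D_\lambda\bigl(\smooth^\infty(\TT^2)\bigr),$$
and Fourier decomposition diagonalizes $D_\lambda$ via $e^{i(m\theta_1+n\theta_2)}\mapsto i(m+\lambda n)e^{i(m\theta_1+n\theta_2)}$. The problem is thereby reduced to divisor estimates for the small quantities $|m+\lambda n|$, $(m,n)\in\ZZ^2$.

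The rational and Diophantine cases are then dispatched quickly. If $\lambda=p/q$ in lowest terms, the integer solutions of $m+\lambda n=0$ form the lattice $\ZZ\cdot(-p,q)$, and the corresponding exponentials $\{e^{ik(-p\theta_1+q\theta_2)}\}_{k\in\ZZ}$ yield linearly independent classes in the cokernel. If $\lambda$ is Diophantine, the bound $|m+\lambda n|\ge c(|m|+|n|)^{-\tau}$ for $(m,n)\ne(0,0)$ lets me invert $D_\lambda$ on mean-zero smooth functions: given $g\in\smooth^\infty(\TT^2)$ with $g_{0,0}=0$, the series $f:=\sum_{(m,n)\ne(0,0)}\tfrac{g_{m,n}}{i(m+\lambda n)}e^{i(m\theta_1+n\theta_2)}$ has Fourier coefficients of the form ``rapidly decaying $\times$ polynomially bounded'', hence still rapidly decaying, so $f\in\smooth^\infty(\TT^2)$ and $D_\lambda f=g$. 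Thus the image is exactly the mean-zero subspace, and the cokernel is one-dimensional.

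The crux of the lemma is the Liouville case. By definition, for every $N\ge 1$ there exists $(m,n)\ne(0,0)$ with $|m+\lambda n|<(|m|+|n|)^{-N}$. I extract inductively a sequence of distinct pairs $(m_k,n_k)\in\ZZ^2$ with $|n_k|$ growing fast (say $|n_k|\ge k^2$) and $|m_k+\lambda n_k|<|n_k|^{-k}$, and partition this sequence into countably many pairwise disjoint infinite subsets $S_1,S_2,\ldots$. For each $j$ I set
$$g_j\;:=\;\sum_{k\in S_j}|n_k|^{-k/2}\,e^{i(m_k\theta_1+n_k\theta_2)}.$$
Super-polynomial decay of the coefficients guarantees $g_j\in\smooth^\infty(\TT^2)$. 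Conversely, any smooth $h$ with $D_\lambda h=\sum_{j=1}^Jc_j g_j$ would, by disjointness of the frequency supports, satisfy $h_{m_k,n_k}=c_j|n_k|^{-k/2}/(i(m_k+\lambda n_k))$ for every $k\in S_j$; the modulus of this coefficient is at least $|c_j|\,|n_k|^{k/2}$, which grows faster than any polynomial and contradicts the smoothness of $h$ unless $c_j=0$. Hence the classes $[g_j]\in H^1(K_\lambda,G)$ are linearly independent, yielding infinite-dimensionality.

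The main obstacle is precisely this Liouville case: one must produce not a single non-trivial class in the cokernel but an infinite linearly independent family, which forces the disjoint-frequency construction above (a single $g$ would only show the cokernel is at least two-dimensional). The bookkeeping in the inductive choice of $(m_k,n_k)$ -- simultaneously ensuring distinctness, rapid growth of $|n_k|$, and the super-polynomial small-divisor inequality -- is the step requiring genuine care, but is standard once the strategy is in place.
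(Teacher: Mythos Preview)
Your argument is correct and follows the same underlying route as the paper: trivialize $G$ by the frame $\partial/\partial\theta_2$ so that $l_1$ becomes the leafwise derivative $D_\lambda$, thereby identifying $H^1(K_\lambda,G)$ with the foliated cohomology $H^1(K_\lambda)\cong \smooth^\infty(\TT^2)/D_\lambda\smooth^\infty(\TT^2)$, and then analyze this cokernel by Fourier methods.

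The difference is one of explicitness rather than strategy. For the rational case the paper invokes the principal $U(1)$-bundle structure $\TT^2\to S^1$ to conclude infinite-dimensionality, whereas you exhibit the kernel lattice $\ZZ\cdot(-p,q)$ directly; these are two views of the same phenomenon. For the irrational cases the paper simply cites the Fourier-analytic computations in Haefliger and Moore--Schochet, while you carry them out in full: the small-divisor inversion in the Diophantine case and, for the Liouville case, the construction of infinitely many independent cohomology classes via disjoint frequency packets. Your self-contained treatment of the Liouville case (extracting a sparse sequence $(m_k,n_k)$ with $|m_k+\lambda n_k|<|n_k|^{-k}$, partitioning it, and showing the resulting $g_j$ are smooth but no non-trivial combination lies in the image) is exactly the content hidden behind those references, so nothing is lost and the argument stands on its own.
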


Here we call an irrational number $\lambda$ \emph{diophantine} if there exists a positive real number $s$ such that 
$\inf\{|m\lambda+n|(1+m^2)^s\}>0$, where the infimum ranges over all  $(m,n)\in \ZZ^2\setminus\{0\}$. The \emph{Liouville}
 numbers are the irrational numbers that are not diophantine; they have the property that they can be well approximated by rational numbers.

\begin{proof}
%The Bott $K$-connection on $G$ is trivial, for
By means of the line bundle isomorphism $G\cong \RR\times M$ mapping the section $\pd{\theta_2}$ to the constant section $1$,  the Bott connection on $G$ is identified  with the   foliated de Rham differential acting on functions. This means that the differential $l_1$ can be identified with the   foliated de Rham differential,  and therefore 
$H(K_{\lambda},G)$ is isomorphic to the   foliated cohomology $H(K_{\lambda})$. 

If $\lambda$ is rational, then the foliation $K_{\lambda}$ is the fiber foliation of a principal $U(1)$-bundle $M\to S^1$, and since the fibers have non-trivial first cohomology it follows that
$H^1(K_{\lambda})$ is infinite-dimensional.
If $\lambda$ is a  Liouville or diophantine number, 
for a computation of $H^1(K_{\lambda})$ using Fourier analysis we refer to
\cite[\S 2.1]{HaelfMinimal} (see also \cite[page 60]{MooreSchochet}).
\end{proof}  
  
  \begin{remark}[Obstructedness]\label{rem:obstr}
In general, the deformation problem of foliations is formally obstructed. This means that in general there is a closed element of $\Gamma(K^*\otimes G)$ -- in other words, a first order deformation --
that can not be extended to a formal curve of Maurer-Cartan elements.  
This was shown in \cite[\S 6.2]{SZPre}, by taking the underlying manifold to be the 4-torus $\TT^4$ and $K=Span\{\pd{\theta_1},\pd{\theta_2} \}$.
\end{remark}

We can now address the smoothness of the moduli space.
\begin{corollary}\label{cor:smooth}
The moduli space of codimension $k$ foliations $\Foliations^k(M))/\Diff_0(M)$  
is generally not smooth. 
\end{corollary}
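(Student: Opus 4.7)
The plan is to exhibit a specific pair $(M,k)$ for which the moduli space manifestly fails to be smooth, which is enough to establish the ``generically not smooth'' claim. I would take $M=\TT^2$ and $k=1$, and use the one-parameter family of linear (Kronecker) foliations $K_\lambda$ from Lemma \ref{lem:lambda}, exploiting the discontinuous behaviour of the formal tangent space dimension along this family.

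The first step is to verify that distinct irrational slopes give distinct orbits of $\Diff_0(\TT^2)$, so that $\lambda \mapsto [K_\lambda]$ really produces a non-trivial one-parameter subset of the moduli space $\Foliations^1(\TT^2)/\Diff_0(\TT^2)$. This is a standard fact about linear foliations on the torus: the asymptotic direction (equivalently, the Ruelle-Sullivan class of a transverse invariant measure) of $K_\lambda$ lies in $H_1(\TT^2,\RR)$ and is invariant under the action of $\Diff_0(\TT^2)$, since any such diffeomorphism acts trivially on $H_1(\TT^2,\ZZ)$.

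Next I would combine the identification $T_{[K]}(\Foliations^k(M)/\Diff_0(M))\cong H^1(K,G)$ recorded just before Lemma \ref{lem:lambda} with the computation of that lemma: the formal tangent dimension at $[K_\lambda]$ equals $1$ for diophantine $\lambda$ and is infinite for Liouville $\lambda$. Since both diophantine and Liouville numbers are dense in $\RR$, the dimension of the formal tangent space is nowhere locally constant along the family $\{[K_\lambda]\}$. Any reasonable notion of smoothness --- be it a finite-dimensional manifold, a Fr\'echet manifold, or any structure locally modelled on a fixed topological vector space --- forces the formal tangent space dimension to be locally constant near each point. The observed jumping precludes this, proving the corollary.

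The main delicate point is the first step, where one must check rigorously that different irrational slopes really give inequivalent foliations under $\Diff_0(\TT^2)$; once this is granted, the conclusion is a direct consequence of Lemma \ref{lem:lambda}. As a complementary perspective, one could instead invoke Remark \ref{rem:obstr}, deducing non-smoothness on $\TT^4$ from formal obstructedness: a smooth moduli space would leave no room for first-order deformations that fail to extend to formal curves of Maurer-Cartan elements.
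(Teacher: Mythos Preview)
Your argument is correct and mirrors the paper's: both use the jump in $\dim H^1(K_\lambda,G)$ from Lemma~\ref{lem:lambda} along the family of linear foliations on $\TT^2$, and both cite obstructedness (Remark~\ref{rem:obstr}) as an alternative route. The only minor differences are that the paper approximates a diophantine slope by \emph{rational} numbers rather than invoking the Liouville/diophantine density, and it omits your orbit-separation step---which is in fact unnecessary, since the formal tangent dimension is itself an orbit invariant, so unequal dimensions already force $[K_\lambda]\neq[K_{\lambda'}]$.
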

\begin{proof}
We first remark that if a moduli space is smooth, then all its tangent spaces are locally isomorphic. 
Let $\lambda$ be a  diophantine number, and consider
 the involutive distribution  $K_{\lambda}$ on the   2-torus as in  Lemma \ref{lem:lambda}. If the moduli space was smooth nearby    $[K_{\lambda}]$, it would be 1-dimensional, because the formal tangent space at $[K_{\lambda}]$ is  1-dimensional by Lemma \ref{lem:lambda}. There is a sequence of rational numbers $(\lambda_n)$ converging to $\lambda$,
since the rational numbers  are dense in the real numbers. Thus we can find involutive distributions $K_{\lambda_n}$ arbitrarily close to $K_{\lambda}$   in the $C^{\infty}$-sense, and we know that the formal tangent space at each $[K_{\lambda_n}]$ is infinite-dimensional  by  Lemma \ref{lem:lambda}, thus not isomorphic to the one at $[K_{\lambda}]$. This provides a contradiction to the smoothness.
  
An alternative argument is as follows. If a moduli space is smooth nearby a given point $[K]$, then the deformations of $K$ are unobstructed. 
%\mcomment{Here I am vague, I guess  "smooth" should implies that any tangent vector can be extended a smooth curve, which moreover can be lifted to $\Foliations^k(M))$}
But we saw  that this is not always the case, in Remark \ref{rem:obstr}.
\end{proof}

\subsubsection{Moduli space of pre-symplectic structures}

We know from Remark \ref{remark: tangent space to pre-symplectic moduli} that 
the formal tangent space to $\Presym^k(M)/\Diff_0(M)$
at the equivalence class of a pre-symplectic form $\eta$ is
$$T_{[\eta]}\left(\Presym^k(M)/\Diff_0(M)\right)\cong H^2_\hor(M).$$
We show that its dimension can vary abruptly.

Recall the short exact sequence of complexes \eqref{eq:ses}. 
%We denoted the cohomology of $\Omega_\hor(M)$ by $H_\hor(M)$, 
It induces a long exact sequence in cohomology
$$\dots\to H^1(M)\to H^1(K)\overset{\delta}{\to}H^2_\hor(M)\to H^2(M)\to \dots$$
where  $H(K)$ denotes the foliated cohomology of the foliation $K$, where $H(M)$ is the de Rham cohomology of $M$, and $\delta$ is the connecting homomorphism. When  $M$ is compact\footnote{This ensures that the de Rham cohomology of $M$ is finite-dimensional.}, 
this leads to the following observation:
%we conclude that   
$H^1(K)$ is infinite-dimensional if{f} $H^2_\hor(M)$ is infinite-dimensional.
% We use this observation to prove the following lemma.

\begin{lemma}\label{lem:lambdamu}
Let $M=\TT^3$ be the 3-torus.
% with ``coordinates'' $\theta_1$ and $\theta_2$. 
For any   $\lambda, \mu \in \RR$
consider the pre-symplectic form $$\eta_{\lambda, \mu}:=d\theta_2\wedge d\theta_3-\lambda d\theta_1\wedge d\theta_3+\mu d\theta_1\wedge d\theta_2.$$

% and its complement $G=Span\{\pd{\theta_2}\}$.
Then 
$$\dim(H^2_\hor(M)) \text{ is }
\begin{cases}
\text{infinite}\;\;\text{if $(\lambda, \mu)\in \QQ^2$}\\
\text{finite} \;\;\;\;\;\text{if $(1,\lambda,\mu)$ satisfies the diophantine condition.}
\end{cases}
$$
  \end{lemma}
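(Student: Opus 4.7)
The plan is to reduce the problem to computing the foliated cohomology $H^1(K)$ of the characteristic foliation of $\eta_{\lambda,\mu}$, and then invoke the observation stated just before the lemma (that $H^1(K)$ is infinite-dimensional if and only if $H^2_\hor(M)$ is). A direct inspection of the matrix representing $\eta_{\lambda,\mu}$ shows that the kernel $K$ of $\eta_{\lambda,\mu}^\sharp$ is spanned by the constant vector field $X_{\lambda,\mu} := \pd{\theta_1} + \lambda\,\pd{\theta_2} + \mu\,\pd{\theta_3}$, so that $K$ is the linear foliation of $\TT^3$ of slope $(1,\lambda,\mu)$. Trivializing $K$ via the nowhere vanishing section $X_{\lambda,\mu}$ identifies the foliated de Rham complex with $0 \to C^{\infty}(\TT^3) \xrightarrow{X_{\lambda,\mu}} C^{\infty}(\TT^3) \to 0$, hence $H^1(K) \cong C^{\infty}(\TT^3)/X_{\lambda,\mu}\bigl(C^{\infty}(\TT^3)\bigr)$.

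Next, I would analyse this cokernel via Fourier expansion $f = \sum_{n\in\ZZ^3} a_n e^{2\pi i n\cdot\theta}$. The operator $X_{\lambda,\mu}$ acts on the mode labelled by $n = (n_1,n_2,n_3)$ as multiplication by $2\pi i(n_1+\lambda n_2+\mu n_3)$. If $(\lambda,\mu) \in \QQ^2$, then this multiplier vanishes for infinitely many nonzero $n$; equivalently, $K$ integrates to the fibres of a principal $S^1$-bundle $\TT^3 \to \TT^2$ and the cokernel contains a copy of $C^{\infty}(\TT^2)$. Hence $H^1(K)$, and by the observation $H^2_\hor(M)$, is infinite-dimensional. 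If $(1,\lambda,\mu)$ is diophantine, meaning $|n_1+\lambda n_2+\mu n_3| \geq C/\|n\|^s$ for some $C,s>0$ and all nonzero $n\in\ZZ^3$, then for any zero-mean $f\in C^{\infty}(\TT^3)$ the formal primitive $g := \sum_{n\neq 0} \bigl(a_n/(2\pi i(n_1+\lambda n_2+\mu n_3))\bigr) e^{2\pi i n\cdot\theta}$ still has Fourier coefficients decaying faster than any polynomial (the diophantine estimate loses only a polynomial factor $\|n\|^s$, which is absorbed by the rapid decay of $(a_n)$). Thus $g$ is smooth and $X_{\lambda,\mu} g = f$, so $X_{\lambda,\mu}$ surjects onto the zero-mean functions, $H^1(K)$ is one-dimensional, and by the observation $H^2_\hor(M)$ is finite-dimensional.

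The main obstacle is the small-divisor analysis in the diophantine case: one must verify that the polynomial loss $\|n\|^s$ in the denominators is indeed dominated by the Schwartz-type decay of Fourier coefficients of smooth functions on $\TT^3$, so that $g$ is genuinely $C^{\infty}$ rather than merely a distribution. This is the classical solution of the cohomological equation on tori, analogous to the analysis in Lemma \ref{lem:lambda} for $\TT^2$, and one can simply quote the standard references already cited there; everything else (identification of $K$, the trivialization of the foliated complex, and the passage from $H^1(K)$ to $H^2_\hor(M)$ via the long exact sequence already recalled) is routine bookkeeping.
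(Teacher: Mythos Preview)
Your proposal is correct and follows essentially the same approach as the paper: reduce to $H^1(K)$ via the long exact sequence observation, identify $K$ as the linear foliation spanned by $\pd{\theta_1}+\lambda\pd{\theta_2}+\mu\pd{\theta_3}$, treat the rational case via the principal $S^1$-bundle structure, and settle the diophantine case by Fourier analysis. The only difference is cosmetic: you spell out the small-divisor computation showing $H^1(K)$ is one-dimensional, whereas the paper simply cites \cite{LinFolTn} and \cite{CRMFoliations} for that result.
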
 
 Above, following \cite[Thm. 1.3.7]{CRMFoliations} and its proof, 
%  \mcomment{
 %I am not sure if there is a typo in the definition there, since the formula is different from the one for "diophantine" in the previous subsection; 
% I should also cite \cite{LinFolTn}  by I can't find the paper online}
  we say that a vector $v\in \RR^3$ satisfies the   \emph{diophantine condition}\footnote{Notice that the expression $|  m \cdot v | (||m||)^s$, specialized to the case $v=(1,\lambda)\in \RR^2$, does not reduce exactly to the similar expression appearing just after   
Lemma \ref{lem:lambda}. However, for fixed $\lambda$, the two diophantine conditions are equivalent (see  \cite[text after eq.  (1.9)]{MoserStabMinFol} for the statement, and \cite[page 85]{MoserMinFolLNM} for a proof).} if  there exists a positive real number $s$ such that 
$\inf |  m \cdot v | (||m||)^s>0$, where the infimum ranges over all  $m \in \ZZ^3\setminus\{0\}$ and the dot denotes the dot product in $\RR^3$.

\begin{proof}
We make use of the observation just before the lemma.
The kernel of $\eta_{\lambda, \mu}$ is the involutive distribution $$K_{\lambda, \mu}:=Span\left\{\pd{\theta_1}+\lambda \pd{\theta_2} +\mu \pd{\theta_3}\right\}.$$
 
If $(\lambda, \mu)\in \QQ^2$, then $K_{\lambda, \mu}$  is the fiber distribution of a principal $U(1)$-bundle $M\to \TT^2$, so $H^1(K_{\lambda, \mu})$ is infinite-dimensional.

If $(1,\lambda,\mu)$ satisfies the    diophantine condition, then $H^1(K_{\lambda, \mu})$ is 1-dimensional. This is proven in \cite{LinFolTn} (see also \cite[Thm. 1.3.7]{CRMFoliations}),
 %taking $N=3, p=1$ there), 
 again using Fourier analysis. 
\end{proof}

 \begin{remark}[Obstructedness]\label{rem:obstrpre}
In general, the deformation problem of pre-symplectic structures is formally obstructed. 
%This means that in general there is a closed element of $\Omega^1(K,G)$ -- in other words, and first order deformation --
%that can not be extended to a formal curve of Maurer-Cartan elements.  
This was shown in \cite[\S 6.1]{SZPre}, refining the example mentioned for foliations in Remark \ref{rem:obstr}.
\end{remark}

We finish addressing the smoothness of the moduli space.
\begin{corollary}
The moduli space of rank $k$ pre-symplectic structures $\Presym^k(M)/\Diff_0(M)$  
is generally not smooth. 
\end{corollary}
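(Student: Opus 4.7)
The plan is to mirror the argument used for foliations in Cor.~\ref{cor:smooth}, exploiting that all the necessary ingredients have already been assembled for pre-symplectic structures on the 3-torus.

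I would first recall the general principle: if a moduli space is smooth near a point, then the formal tangent spaces at nearby points must be locally isomorphic (in particular, their dimensions must be locally constant). By Remark~\ref{remark: tangent space to pre-symplectic moduli}, the formal tangent space to $\Presym^k(M)/\Diff_0(M)$ at the class of a pre-symplectic form $\eta$ is identified with $H^2_\hor(M)$, and this is exactly the invariant whose behavior Lemma~\ref{lem:lambdamu} controls.

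Next I would specialize to $M=\TT^3$ and the family $\eta_{\lambda,\mu}$ of Lemma~\ref{lem:lambdamu}. Pick a pair $(\lambda,\mu)$ such that $(1,\lambda,\mu)$ satisfies the diophantine condition; then $\dim H^2_\hor(\TT^3)$ is finite at $[\eta_{\lambda,\mu}]$. On the other hand, $\QQ^2$ is dense in $\RR^2$, so there exist pairs $(\lambda_n,\mu_n)\in\QQ^2$ with $(\lambda_n,\mu_n)\to(\lambda,\mu)$, hence $\eta_{\lambda_n,\mu_n}\to\eta_{\lambda,\mu}$ in the $C^\infty$-topology. At each of these rational points $H^2_\hor(\TT^3)$ is infinite-dimensional, so the dimension of the formal tangent space jumps arbitrarily close to $[\eta_{\lambda,\mu}]$, contradicting smoothness.

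An alternative, shorter route would be to invoke Remark~\ref{rem:obstrpre}: smoothness of the moduli space nearby $[\eta]$ would imply unobstructedness of the deformation problem at $\eta$, but there exist examples of obstructed pre-symplectic deformations (the example on $\TT^4$ from \cite[\S6.1]{SZPre}), giving the desired contradiction. I expect no serious obstacle in the write-up, as everything follows by assembling Remark~\ref{remark: tangent space to pre-symplectic moduli}, Lemma~\ref{lem:lambdamu}, and Remark~\ref{rem:obstrpre}; the only subtlety worth stating explicitly is that $C^\infty$-closeness of the $\eta_{\lambda_n,\mu_n}$ to $\eta_{\lambda,\mu}$ indeed yields points of the moduli space arbitrarily close to $[\eta_{\lambda,\mu}]$, which is immediate since the quotient map $\Presym^k(M)\to\Presym^k(M)/\Diff_0(M)$ is continuous.
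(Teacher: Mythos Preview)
Your proposal is correct and follows essentially the same approach as the paper: both use Lemma~\ref{lem:lambdamu} on $\TT^3$ to exhibit a jump in the dimension of $H^2_\hor(M)$ by approximating a diophantine pair by rational ones, and both mention the alternative argument via the obstructedness result of Remark~\ref{rem:obstrpre}. Your write-up is in fact slightly more detailed than the paper's sketch (which simply points to the analogy with Cor.~\ref{cor:smooth}), but the content is the same.
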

\begin{proof}
We just sketch the proof, since it is analogous to the one of Corollary \ref{cor:smooth}.
One approach is to use the fact that if a moduli space is smooth, then all its tangent spaces are locally isomorphic. For this, consider the pre-symplectic form $\eta_{\lambda, \mu}$ of Lemma \ref{lem:lambdamu}  with
  $(1,\lambda, \mu)$ satisfying the diophantine condition, and notice that there is a sequence 
 $(\lambda_n, \mu_n)\in \QQ^2$ converging to $(\lambda, \mu)$.

Another approach is to use obstructed first order deformations, 
whose existence is guaranteed by Remark \ref{rem:obstrpre}.
\end{proof}

\appendix

\section{Pre-symplectic forms on $M\times [0,1]$}

We present some elementary facts about pre-symplectic forms on a product manifold, which we use in Remark \ref{rem:altproof} and which are of independent interest.
Any 2-form on $M\times [0,1]$ can be written as $$\Theta:=\eta_t+dt \wedge A_t$$ for smooth families 
$(\eta_t)_{t\in [0,1]}$ in $\Omega^2(M)$ and $(A_t)_{t\in [0,1]}$ in $\Omega^1(M)$. 
To describe its kernel in terms of $\eta_t$ and $A_t$, one has to distinguish\footnote{Indeed,  $v+c\partial_t\in \ker(\eta_t+dt \wedge A_t)$
if{f} $\iota_v A_t=0$ and $\iota_v\eta_t=-cA_t$. In the case that $A_t$ lies in $(\ker (\eta_t))^{\circ}=im(\eta_t^{\sharp})$, the second equation has solutions for all $c\in \RR$ and the first equation is automatically satisfied.}
 two cases:  
 \begin{equation}\label{eq:kerTheta} 
                     \ker(\Theta)_{(p,t)}=
\begin{cases}
\{v+c\partial_t: v\in T_pM, c\in \RR, \iota_v\eta_t=-cA_t\} \quad\text{if $A_t\in (\ker (\eta_t))^{\circ}$ at $p$}\\
\ker (\eta_t)\cap \ker (A_t) \quad\quad\quad\quad\quad\quad\quad\quad\quad\quad\quad\text{otherwise.}
                    \end{cases}
 \end{equation}
From this we see that 
 \begin{equation}\label{eq:rankkerTheta}
                    \dim(\ker(\Theta)_{(p,t)})=
\begin{cases}
\dim(\ker(\eta_t))+1 \quad\quad\quad\text{if $A_t\in (\ker (\eta_t))^{\circ}$ at $p$}\\
\dim(\ker(\eta_t))-1 \quad\quad\quad\text{otherwise.}
                   \end{cases}
 \end{equation}

It can happen that $\Theta$ has constant rank and each of the two cases applies at points of $M\times [0,1]$: take for instance $M=\RR^2$, $\eta_t=tdx\wedge dy$ and $A_t=dx$, so that $\Theta=dx\wedge(tdy-dt)$.

\begin{lemma}\label{lem:dimensions}
Consider the 2-form $\Theta=\eta_t+dt \wedge A_t$ on $M\times [0,1]$. Any two of the following conditions implies the remaining one:

(1) $\Theta$ has constant rank

(2) for all $t$, the rank of $\eta_t$ is constant and independent of $t$ 

(3) the set of points $(p,t)$ for which $(A_t)_p\in (\ker (\eta_t)_p)^{\circ}$
is either  $M\times [0,1]$ or empty.

\end{lemma}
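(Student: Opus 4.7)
My proof proposal: the entire argument will be pointwise in $(p,t)\in M\times [0,1]$, leveraging the dimension count \eqref{eq:rankkerTheta} already established in the appendix. Let $n=\dim M$. Since $\dim(M\times[0,1])=n+1$, equation \eqref{eq:rankkerTheta} translates directly into the following rank identity:
\begin{equation*}
\mathrm{rank}(\Theta)_{(p,t)}=
\begin{cases}
\mathrm{rank}(\eta_t)_p & \text{if } (A_t)_p\in (\ker (\eta_t)_p)^{\circ},\\
\mathrm{rank}(\eta_t)_p+2 & \text{otherwise.}
\end{cases}
\end{equation*}
Denote by $S\subset M\times[0,1]$ the set appearing in condition (3). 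The rank identity says that the integer-valued function $\delta(p,t):=\mathrm{rank}(\Theta)_{(p,t)}-\mathrm{rank}(\eta_t)_p$ takes value $0$ on $S$ and value $2$ on its complement.

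The three implications now follow. For $(2)\wedge(3)\Rightarrow(1)$: by (2) the function $(p,t)\mapsto \mathrm{rank}(\eta_t)_p$ is a constant $r$, and by (3) either $S=M\times[0,1]$ (so $\mathrm{rank}(\Theta)\equiv r$) or $S=\emptyset$ (so $\mathrm{rank}(\Theta)\equiv r+2$); either way $\Theta$ has constant rank. For $(1)\wedge(3)\Rightarrow(2)$: by (3), $\delta$ is a constant (either $0$ or $2$), and by (1) $\mathrm{rank}(\Theta)$ is constant, hence so is $\mathrm{rank}(\eta_t)_p=\mathrm{rank}(\Theta)_{(p,t)}-\delta$.

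The only implication requiring a moment of care is $(1)\wedge(2)\Rightarrow(3)$. Here (1) and (2) together force $\delta$ to be a constant, equal to $R-r$, where $R$ and $r$ are the constant ranks of $\Theta$ and of the $\eta_t$ respectively. But at every point the value of $\delta$ is either $0$ or $2$, so the constant $R-r$ must itself belong to $\{0,2\}$. If $R-r=0$ then $\delta$ vanishes identically, i.e.\ $S=M\times[0,1]$; if $R-r=2$ then $\delta\equiv 2$, i.e.\ $S=\emptyset$. In both cases (3) holds. I expect no genuine obstacle: once the rank identity above is recorded, each implication reduces to a short pointwise bookkeeping argument, and the only subtlety is noting that the constant $R-r$ is forced into the set $\{0,2\}$ in the last implication.
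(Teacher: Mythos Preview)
Your proof is correct and follows exactly the approach the paper takes: the paper's own proof is the single line ``Apply eq.~\eqref{eq:rankkerTheta},'' and your argument is a careful unpacking of that directive. Your conversion of the kernel-dimension formula into the rank identity and the ensuing bookkeeping with $\delta$ is precisely what that one-line proof intends.
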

\begin{proof}
Apply eq. \eqref{eq:rankkerTheta}.
\end{proof}

Now we bring the closeness condition into play.
Clearly a two form $\Theta=\eta_t+dt \wedge A_t$ is closed if{f} $d\eta_t=0$ and $\frac{\partial \eta_t}{\partial t}
=dA_t$ for all $t$. 
Hence from  Lemma \ref{lem:dimensions} we obtain:
\begin{corollary}\label{cor:presmxI}
Consider   smooth families
$(\eta_t)_{t\in [0,1]}$ in $\Omega^2(M)$ and $(A_t)_{t\in [0,1]}$ in $\Omega^1(M)$. 
Assume that $A_t\in (\ker (\eta_t))^{\circ}$ at every point. Then the following is equivalent:
\begin{itemize}
\item  $\eta_t+dt \wedge A_t$ is a pre-symplectic form on $M\times [0,1]$
\item for all $t$, $\eta_t$ is a pre-symplectic form on $M$ whose rank is independent of $t$, and $\frac{\partial \eta_t}{\partial t}
=dA_t$.
\end{itemize}

\end{corollary}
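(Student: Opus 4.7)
The plan is to decompose the pre-symplectic condition on $\Theta:=\eta_t+dt\wedge A_t$ into its two defining conditions, closedness and constant rank, and treat each separately using what has already been set up.

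First I would handle the closedness. As recorded just before the statement of the corollary, for a 2-form of the form $\Theta=\eta_t+dt\wedge A_t$ on $M\times[0,1]$ one has $d\Theta=0$ if and only if $d\eta_t=0$ for all $t$ and $\frac{\partial \eta_t}{\partial t}=dA_t$ for all $t$. This is an immediate consequence of splitting the de Rham differential on $M\times[0,1]$ into its $M$-part and its $t$-part, and it already establishes the equivalence of the closedness halves of the two bulleted conditions.

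Next I would handle the constant rank condition via Lemma \ref{lem:dimensions}. The hypothesis $A_t\in(\ker\eta_t)^\circ$ at every point is precisely condition (3) of that lemma in its "$M\times[0,1]$" alternative. Hence (3) holds, and the lemma gives the equivalence of (1) and (2), i.e.\ $\Theta$ has constant rank on $M\times[0,1]$ if and only if $\eta_t$ has constant rank on $M$ for each $t$ and this rank is independent of $t$.

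Combining the two equivalences finishes the proof: $\Theta$ is pre-symplectic (closed and of constant rank) iff $\eta_t$ is pre-symplectic on $M$ for each $t$ with rank independent of $t$ and $\frac{\partial\eta_t}{\partial t}=dA_t$. No step here is genuinely hard — the substantive work was already done in Lemma \ref{lem:dimensions} (and in the kernel computation \eqref{eq:kerTheta}--\eqref{eq:rankkerTheta}). The only mild subtlety worth flagging is that one must invoke the hypothesis $A_t\in(\ker\eta_t)^\circ$ explicitly in order to land in the correct branch of \eqref{eq:rankkerTheta}, so that the dimensions $\dim\ker\eta_t+1$ (rather than $\dim\ker\eta_t-1$) are the ones being controlled; otherwise the equivalence could fail, as the example $M=\RR^2$, $\eta_t=t\,dx\wedge dy$, $A_t=dx$ given before the lemma shows.
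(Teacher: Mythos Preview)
Your proposal is correct and follows exactly the approach of the paper: split the pre-symplectic condition into closedness and constant rank, use the remark before the corollary for the former, and invoke Lemma~\ref{lem:dimensions} (with the hypothesis $A_t\in(\ker\eta_t)^\circ$ supplying condition~(3)) for the latter. The paper's proof is just the one-line ``Hence from Lemma~\ref{lem:dimensions} we obtain'' after recording the closedness equivalence, and your write-up unpacks precisely this.
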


\bigskip
  
\bibliographystyle{habbrv} 
%\bibliography{Pres}

\end{document}